\numberwithin{equation}{section}
\newcommand{\transposee}[1]{{\vphantom{#1}}^{\mathit t}{#1}}
\newtheorem{defi}{Definition}[section]
\newtheorem{theo}[defi]{Theorem}
\newtheorem{lem}[defi]{Lemma}
\newtheorem{prop}[defi]{Proposition}
\newtheorem{cor}[defi]{Corollary}
\newtheorem{rem}[defi]{Remark}
\let\lm\limits
\let\ds\displaystyle
\def\ve{{\varepsilon}}
\def\pa{{\partial}}
\def\R{{\mathbb{R}}}
\def\nit{\noindent}
\def\di{\mbox{d}}
\def\Om{\Omega}
\def\om{\omega}
\def\intO{\int\lm_\Om}
\def\intOt{\int\lm_{\Om_T}}
\def\ve{\varepsilon}
\def\vphi{\varphi}
\def\lt{\left}
\def\rt{\right}
\def\C{\mathcal C}
\def\a{\tau_{11}}
\def\b{\tau_{12}}
\def\c{\tau_{22}}
\def\ee{{\ve\eta}}
\def\bu{\boldsymbol u}
\def\bv{\boldsymbol v}
\def\bs{\boldsymbol \sigma}
\def\bt{\boldsymbol \tau}
\def\bn{\boldsymbol n}
\def\L{\boldsymbol L}
\def\H{\boldsymbol H}
\def\bds{\boldsymbol}
\begin{document}
\onehalfspace
% \doublespace

\title{Viscoelastic fluids in thin domains: a mathematical proof}%\footnote{AMS subjects classifications: 35R35, 35B45, 35B65, 74K35, 76D08}}

\author{{\bf Guy Bayada\footnote{INSA-Lyon - Institut Camille Jordan - CNRS UMR 5208}\footnote{INSA-Lyon - LAMCOS - CNRS UMR 5259}, Laurent Chupin\footnotemark[2] \,and B\'er\'enice Grec\footnote{Ecole Centrale de Lyon - Institut Camille Jordan - CNRS UMR 5208}\footnote{E-mail: berenice.grec@insa-lyon.fr, Tel: +33 4 72 43 70 40, Fax: +33 4 72 43 85 29.}}\\
Batiment L\'eonard de Vinci - 21, avenue Jean Capelle\\
69 621 Villeurbanne cedex - France
}

\date{}

\maketitle
\begin{abstract}
The present paper deals with non Newtonian viscoelastic flows of Oldroyd-B type in thin domains. Such geometries arise for example in the context of lubrication. More precisely, we justify rigorously the asymptotic model obtained heuristically by proving the mathematical convergence of the Navier-Stokes/Oldroyd-B sytem towards the asymptotic model.
% The present paper deals with the rigorous computation of the constitutive law of a non Newtonian viscoelastic Oldroyd-B fluid in a thin gap in the context of the lubrication.
\end{abstract}

\textbf{Keywords:} Viscoelastic fluids, Thin film, Oldroyd model, Lubrication flow, Asymptotic analysis.

\section{Introduction}
This paper concerns the study of a viscoelastic fluid flow in a thin gap, the motion of which is imposed due to non homogeneous boundary conditions.

When a Newtonian flow is contained between two close given surfaces in relative motion, it is well known that it is possible
to replace the Stokes or Navier-Stokes equations governing the fluid's motion by a simpler asymptotic model. The asymptotic pressure is proved to be independent of the normal direction to the close surfaces and obeys the Reynolds thin film equation whose coefficients include the velocities, the geometrical description of the surrounding surfaces and some rheological characteristics of the fluid. As a following step, the computation of this pressure allows an asymptotic velocity of the fluid to be easily computed. Such asymptotic procedure first proposed in a formal way by Reynolds \cite{BC86} has been rigorously confirmed for Newtonian stationary flow \cite{ABC94}, and then generalized in a lot of situations covering numerous applications for both compressible fluid \cite{MS05}, unsteady cases \cite{BCC99}, multifluid flows \cite{Pao03}.

 It is well known however that in numerous applications, the fluid to be considered is a non Newtonian one. This is the case
for numerous biological fluids, modern lubricants in engineering applications due to the additives they contain, polymers in injection or molding process. In all of these applications, there are situations in which the flow is anisotropic. It is usual to take account of this geometrical effect in order to simplify the three-dimensional equations of the motion, trying to recover two dimensional Reynolds like equation with respect to the pressure only. Such procedures are more often heuristic ones. Nevertheless, some mathematical works appeared in the literature to justify them.  They include thin film asymptotic studies of Bingham flow \cite{BK04}, quasi Newtonian flow (Carreau's law, power law or Williamson's law, in which various stress-velocity relations are chosen: \cite{BMT93}, \cite{BT95}, \cite{ST05}) and also micro polar ones \cite{BL96}. It has been possible to obtain rigorously some thin film approximation for such fluids using a so called generalized Reynolds equation for the pressure.

However in the preceding examples, elasticity effects are neglected. Introduction of such viscoelastic behavior is characterized by the Deborah number which is related to the relaxation time. One of the most popular laws is the Oldroyd-B model whose constitutive equation is an interpolation between purely viscous and purely elastic models, thus introducing an additional parameter which describes the relative  proportion of both behaviors. A formal procedure has been proposed in \cite{BCM07}. However, the asymptotic system so obtained lacks the usual characteristic of classical generalized Reynolds equation as it has not been possible to gain an equation in the asymptotic pressure only. Both velocity $u^*$ and pressure $p^*$ are coupled by a non linear system.

It is the goal of this paper to justify rigorously this asymptotic system. Section \ref{beginning} is devoted to the precise statement of the 3-D problem. One difficulty has been to find an existence theorem for the general Oldroyd-B model, acting as a starting point for the mathematical procedure. Most of the existence theorems, however, deal with small data or small time assumptions. To control this kind of property with respect to the smallness of the gap appears somewhat difficult. So we are led to consider a more particular Oldroyd-B model, for which unconditional existence theorem has been proved \cite{LM00}.  Moreover, a specific attention is devoted to the boundary conditions to be introduced both on the velocity and on the stress. The goal is to use "well prepared" boundary conditions so as to prevent boundary layer on the lateral side of the domain.

In Section \ref{asymptexp}, after suitable scaling procedure, asymptotic expansions of both pressure, viscosity and stress are introduced, taking into account the previous formal results from \cite{BCM07}. Section \ref{ssec:limit} is mainly concerned with the proof of some additional regularity properties for the formal asymptotic solution. Assuming some restrictions on the rheological parameters, it will be proved that it is possible to gain a $C^k$ regularity for $p^*$ , $k>1$, which in turn improves the regularity of $u^*$ and the stress tensor $\sigma^*$. This result is obtained by introducing a differential Cauchy system satisfied by the derivative of $p^*$. Finally, section \ref{remainders}, is devoted to the convergence towards zero of the second term of the asymptotic expansions, which in turn proves the convergence of the solution of the real 3-D problem towards $u^*$, $p^*$, $\sigma^*$ (Theorems \ref{th:cv} and \ref{mainth_p}).

\section{Introduction of the problem and known results}\label{beginning}
\subsection{Formulation of the problem}
We consider unsteady incompressible flows of viscoelastic fluids, which are ruled by Oldroyd's law, in a thin domain $\hat\Om^\ve=\{(x,y)\in \R^n, x\in \om ~ \text{and} ~ 0<y<\ve h(x)\}$, where $\om$ is an $(n-1)$-dimensional domain, with $n=2$ or $n=3$ ($x=x_1$ or $x=(x_1,x_2)$), as in Figure \ref{fig:domain}.  
% \begin{figure}[H]
% \centering
% \psfrag{Oe}{$\hat\Om^\ve$}
% \psfrag{o}{$\om$}
% \psfrag{Gl}{$\Gamma_L^\ve$}
% \psfrag{x1}{$x_1$}
% \psfrag{x2}{$x_2$}
% \psfrag{y}{$y$}
% \psfrag{s}{$s$}
% \psfrag{h}{$\ve h(x)$}
% \includegraphics{domain.eps}
% \caption{Domain $\hat\Om_\ve$ \label{fig:domain}}
% \end{figure}
\begin{figure}[H]
\centering
% \begin{tikzpicture}[>=stealth']
% %\draw[dashed] (0,0) -- (4,0.5) -- (3,1.5) -- (-1,1) -- cycle ;
%  \draw[ dashed] (6,0.75) -- (4.5,2.25) -- (-1.5,1.5) ; \draw (-1.5,1.5) --(0,0) --  (6,0.75) ;
%  \draw[->] (0,0) --  (7,0.875) node[below] {$x_1$};  \draw[->] (0,0) -- (-2.5,2.5) node[left] {$x_2$};
% \draw[->] (0,0) -- (0,3.5) node[right] {$y$} ; \draw (-1.5,1.5) -- (-1.5,4.5) ; \draw (6,0.75) -- (6,3.75) ; \draw[dashed] (4.5,2.25) -- (4.5,5.25) ; 
% \draw (0,3) -- (-1.5,4.5) ; \draw (6,3.75) --(4.5,5.25) ;
% \draw (0,3) to[out=-30,in=-150] (6,3.75); \draw (-1.5,4.5) to[out=-30,in=-150] (4.5,5.25);
% \draw[<->] (6.4,0.785) -- node[right] {$\varepsilon h(x)$} (6.4,3.75) ;
% \draw[->] (1.5,0.55) --node[above] {$s$} (3,0.75) ;
% \node at (4,1.2) {$\omega$} ; \node at (0.65,2.15) {$\Omega^\varepsilon$} ;
% \draw[pattern=north east lines] (-1.5,1.5) --(0,0) -- (0,3) -- (-1.5,4.5) -- cycle ;
% \draw[dashed, pattern=north east lines]  (6,0.75) -- (6,3.75) --(4.5,5.25) -- (4.5,2.25)  -- cycle ;
% \node at (3,5.75) {$\Gamma^\varepsilon_L$}; \draw[->] (2.75,5.5) -- (-1,3.5); \draw[->] (2.75,5.5) -- (5,3.5); 
% \end{tikzpicture}
\includegraphics[trim = 1cm 3.5cm 1cm 1cm, clip, width=10cm]{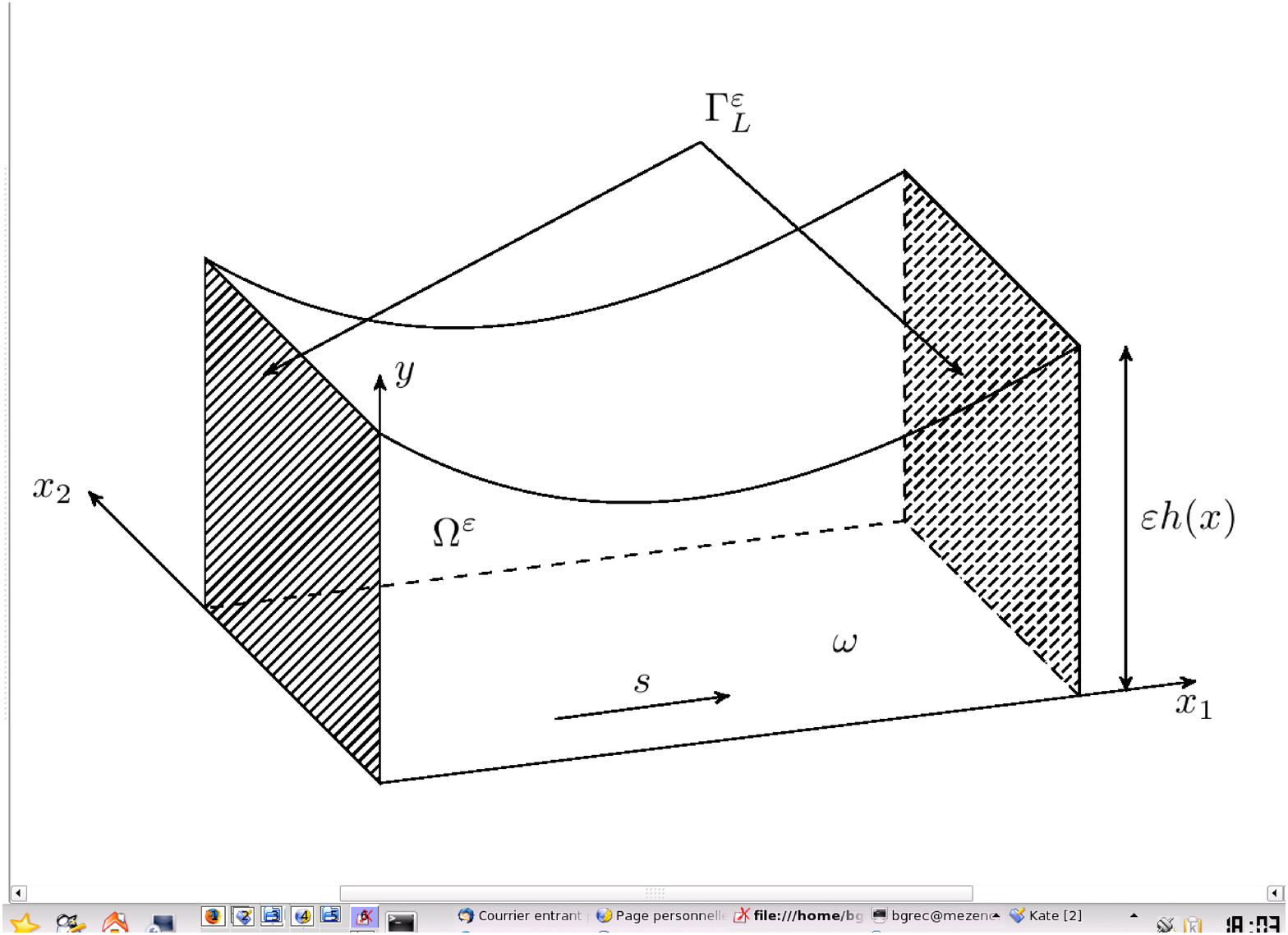}
\caption{Domain $\hat\Omega_\varepsilon$ \label{fig:domain}}
\end{figure}

\nit The following hypotheses on $h$ are required: 
\begin{equation*}
 \forall x\in \om, ~0 <h_0\leq h(x)\leq h^\ve_M, \quad \text{and} \quad h^\ve\in \mathcal C^1(\bar\om).
\end{equation*}
Let $\hat \bu^\ve=(\hat u^\ve_1,\hat u^\ve_2,\hat u^\ve_3)$ be the velocity field in the three-dimensional case, or $\hat \bu^\ve=(\hat u^\ve_1, \hat u^\ve_2)$ in the two-dimensional case, $\hat p^\ve$ the pressure, and $\hat \bs^\ve$ the stress symmetric tensor in the domain $\hat\Om^\ve$. Bold letters stand for vectorial or tensorial functions, the notation $\hat f$ corresponds to a function $f$ defined in the domain $\hat\Om^\ve$, and the superscript $^\ve$ denotes the dependence on $\ve$.

\paragraph{Formulation of the problem} 
The following formulation of the problem holds in $(0,\infty)\times \hat\Om^\ve$:
\begin{equation}
\{\begin{array}{rcl}\label{EQve}
\rho ~\pa_t \hat\bu^\ve + \rho ~\hat\bu^\ve \cdot \nabla \hat\bu^\ve - (1-r) \nu\,\Delta\hat \bu^\ve + \nabla \hat p^\ve &=& \nabla \cdot \hat\bs^\ve \,,\\
\nabla \cdot \hat\bu^\ve &=& 0 \,,\\
\lambda ~\lt(\pa_t \hat\bs^\ve + \hat\bu^\ve \cdot \nabla\hat \bs^\ve + g(\hat\bs^\ve,\nabla \hat\bu^\ve)\rt) + \hat\bs^\ve &=& 2r \nu D(\hat\bu^\ve)\,,
\end{array} \rt.
\end{equation}
where the nonlinear terms $g(\hat\bs^\ve,\nabla \hat\bu^\ve)$, the vorticity tensor $W(\hat\bu^\ve)$ and the deformation tensor $D(\hat\bu^\ve)$ are given by: 
\begin{gather*}
 g(\hat\bs^\ve,\nabla \hat\bu^\ve)= - W(\hat\bu^\ve) \cdot \hat\bs^\ve + \hat\bs^\ve \cdot W(\hat\bu^\ve),\\
 W(\hat\bu^\ve) = \dfrac{\nabla\hat \bu^\ve - \transposee{\nabla \hat\bu^\ve}}2\quad \text{and}  \quad D(\hat\bu^\ve) = \dfrac{\nabla \hat\bu^\ve + \transposee{\nabla \hat\bu^\ve}}2\,.
\end{gather*}

\nit In this formulation, the physical parameters are the viscosity $\nu$, the density $\rho$, and the relaxation time  $\lambda$. The parameter $\lambda$ is related to the viscoelastic behavior and the Deborah number. The parameter $r\in [0,1)$ describes the relative proportion of the viscous and elastic behavior. 

\paragraph{Initial conditions} 
This problem is considered with the following initial conditions:
\begin{equation}\label{IC}
\hat\bu^\ve|_{t=0} = \hat\bu^\ve_0, \quad \hat\bs^\ve|_{t=0} = \hat\bs^\ve_0,
\end{equation}
for $\hat\bu^\ve_0 \in \L^2(\hat\Om^\ve)$, $\hat\bs^\ve_0\in \L^2(\hat\Om^\ve)$. The bold notation $\L^2(\hat\Om^\ve)$ denotes the set of vectorial or tensorial functions whose all components belong to $L^2(\hat\Om^\ve)$.

\paragraph{Boundary conditions} Dirichlet boundary conditions are set on top and bottom of the domain, and the conditions on the lateral part of the boundary $\hat\Gamma^\ve_L$, defined by
\begin{equation*}
 \hat\Gamma^\ve_L=\{(x,y)\in \R^n, x\in \pa\om ~ \text{and} ~0 <y< \ve h(x)\},
\end{equation*}
will be specified later (in section \ref{ssec:BC}). Therefore, it is possible to write the boundary conditions in a shortened way:
\begin{equation}\label{BC}
\hat\bu^\ve|_{\pa\hat\Om^\ve} = \hat{\bds J}^\ve,
\end{equation}
where $\hat{\bds J}^\ve$ is a given function such that $\hat{\bds J}^\ve\in \H^{1/2}(\pa\hat\Om^\ve)$ and satisfying $\hat{\bds J}^\ve|_{y=h^\ve}=0$, $\hat{\bds J}^\ve|_{y=0}=(s,0)$. This function will be fully determined in Subsection \ref{ssec:BC}.

\nit Since $\hat\bs^\ve$ satisfies a transport equation in the domain $\hat\Om^\ve$, it remains to impose boundary conditions on $\hat\bs^\ve$ on the part of the boundary where $\hat\bu^\ve$ is an incoming velocity. Let us define $\hat\Gamma^\ve_+$ the part of $\hat\Gamma^\ve_L$ such that $\hat{\bds J}^\ve|_{\hat\Gamma_+^\ve}\cdot n < 0$, and $\hat\Gamma_-^\ve = \hat\Gamma_L^\ve\setminus \hat\Gamma_+^\ve$. We set
\begin{equation}\label{BCSigma}
\hat\bs^\ve|_{\hat\Gamma_+^\ve} = \hat{\bds\theta}^\ve,
\end{equation}
where $\hat{\bds\theta}^\ve$ is a given function in $\H^{1/2}(\hat\Gamma_+^\ve)$ which will also be determined in Subsection \ref{ssec:BC}.

\nit Moreover, since the pressure is defined up to a constant, the mean pressure is chosen to be zero: $\int\lm_{\hat\Om^\ve} \hat p^\ve=0$.

\paragraph{Notations} Let us introduce the following function space:
\begin{equation*}
 V=\{\hat{\bds\vphi} \in \H^1_0(\hat\Om^\ve),\, \nabla \cdot\hat{\bds\vphi} = 0\},
\end{equation*}
and the following notations, that will be used in the following. For $\hat f$ defined in $\hat\Om^\ve$:
\begin{compactitem}
 \item $|\hat f|$ denotes the $L^2$-norm in $\hat\Om^\ve$,
\item $|\hat f|_p$ denotes the $L^p$-norm in $\hat\Om^\ve$, for $2<p\leq +\infty$,
\item the spaces $\C^m(\overline{\hat{\Om}^\ve})$ for $m\geq 1$ are equipped with the norms $\|\hat f\|_{\C^m} = |\hat f|_\infty + \sum\lm_{i=1}^m |\hat f^{(i)}|_\infty$.
\end{compactitem}
 For  $\hat f$ defined in $\R^+ \times \hat\Om^\ve$, $\|\hat f\|_{L^\alpha(L^\beta)}$ denotes the norm of the space $L^\alpha(0,\infty,L^\beta(\hat\Om^\ve))$, with $1\leq \alpha, \,\beta \leq \infty$.

\subsection{Existence theorem in the domain $\hat\Om^\ve$}
\begin{theo}\label{th:LM}
  For $\ve>0$ fixed, problem \eqref{EQve}-\eqref{BC} admits a weak solution 
\begin{equation*}
 \hat \bu^\ve \in L^2_{loc}(0,\infty,\H^1(\hat\Om^\ve))\,, \quad \hat p^\ve\in L^2_{loc}( 0,\infty,L^2(\hat\Om^\ve))\,,\quad \hat\bs^\ve \in \C(0,\infty,\L^2(\hat\Om^\ve))\,.
\end{equation*}
\end{theo}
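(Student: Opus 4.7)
The plan is to reduce the existence question to the framework of Lions--Masmoudi \cite{LM00}, exploiting the fact that the particular nonlinear term $g(\hat\bs^\ve,\nabla\hat\bu^\ve)=-W(\hat\bu^\ve)\cdot\hat\bs^\ve+\hat\bs^\ve\cdot W(\hat\bu^\ve)$ chosen in \eqref{EQve} is the \emph{corotational} objective derivative: only the antisymmetric part of $\nabla\hat\bu^\ve$ appears. This is precisely the case for which global-in-time weak solutions exist without smallness assumptions on the data. The argument I would follow has three stages: lifting the non-homogeneous data, Galerkin-type approximation with energy estimates, and compactness in the nonlinear terms.

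First, I would construct a divergence-free lifting $\hat{\bds R}^\ve\in\H^1(\hat\Om^\ve)$ of $\hat{\bds J}^\ve$ (the zero-flux compatibility is immediate since $\hat{\bds J}^\ve$ vanishes on the top) and a lifting $\hat{\bds\Theta}^\ve$ of $\hat{\bds\theta}^\ve$ off $\hat\Gamma^\ve_+$. Setting $\hat\bu^\ve=\hat{\bds R}^\ve+\tilde\bu^\ve$ and $\hat\bs^\ve=\hat{\bds\Theta}^\ve+\tilde\bs^\ve$ reduces \eqref{EQve}--\eqref{BCSigma} to a system with homogeneous Dirichlet conditions for $\tilde\bu^\ve\in V$ and vanishing inflow trace for $\tilde\bs^\ve$, plus source terms in $\L^2$ depending on the liftings. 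Then I would set up a Galerkin scheme on a Hilbertian basis of $V$ for the velocity, while treating the stress equation as a linear transport equation with a given approximate velocity --- standard characteristics or DiPerna--Lions theory give existence and continuity in time values of $\hat\bs^\ve$ in $\L^2$.

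The key \emph{a priori} estimate exploits the corotational identity
\begin{equation*}
\int_{\hat\Om^\ve} g(\hat\bs^\ve,\nabla\hat\bu^\ve):\hat\bs^\ve\,dx=\int_{\hat\Om^\ve}\bigl(-W(\hat\bu^\ve)\cdot\hat\bs^\ve+\hat\bs^\ve\cdot W(\hat\bu^\ve)\bigr):\hat\bs^\ve\,dx=0,
\end{equation*}
valid for any symmetric $\hat\bs^\ve$ because $W(\hat\bu^\ve)$ is antisymmetric. Testing the momentum equation by $\hat\bu^\ve-\hat{\bds R}^\ve$ and the stress equation by $\hat\bs^\ve/(2r\nu)$, then summing, the troublesome cross-term $\int \nabla\cdot\hat\bs^\ve\cdot\hat\bu^\ve$ cancels against $\int D(\hat\bu^\ve):\hat\bs^\ve$, and the cubic term disappears by the identity above. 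This yields uniform bounds of $\hat\bu^\ve$ in $L^\infty(L^2)\cap L^2(H^1)$ and of $\hat\bs^\ve$ in $L^\infty(L^2)$, plus a bound on $\partial_t\hat\bu^\ve$ in a negative-order space via \eqref{EQve}.

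The last, and I expect main, obstacle is passing to the limit in the bilinear term $\hat\bu^\ve\cdot\nabla\hat\bs^\ve$ and in $g(\hat\bs^\ve,\nabla\hat\bu^\ve)$, since the stress only enjoys $L^2$ bounds (no gain of regularity from the transport equation). The device of Lions--Masmoudi, which I would follow, is to regularize the transport equation (propagation of $L^p$ bounds on a smoothed $\hat\bs^\ve$ for $p>2$) and to use strong compactness of $\hat\bu^\ve$ in $L^2$ via Aubin--Lions to identify the products in the sense of distributions, together with a renormalization argument à la DiPerna--Lions to legitimize the stress energy identity. The pressure $\hat p^\ve$ is finally recovered by de Rham's theorem, together with the zero-mean condition, yielding the announced regularity $\hat p^\ve\in L^2_{loc}(0,\infty;L^2(\hat\Om^\ve))$. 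The result is then exactly an adaptation of \cite{LM00} to non-homogeneous velocity data and an inflow boundary condition on the stress, both of which are handled by the liftings constructed in the first step.
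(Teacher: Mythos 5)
The paper's own ``proof'' of Theorem~\ref{th:LM} is a single sentence citing Lions--Masmoudi~\cite{LM00}; your proposal is a faithful reconstruction of that argument (corotational cancellation in the stress energy estimate, propagation/renormalization for the transport equation, Aubin--Lions compactness, de~Rham for the pressure), augmented by the lifting of the non-homogeneous Dirichlet and inflow data to reduce to the homogeneous setting of~\cite{LM00}. This is essentially the same approach the authors invoke, simply spelled out rather than delegated to the reference, and the lifting step correctly addresses the one point (non-homogeneous $\hat{\bds J}^\ve$, $\hat{\bds\theta}^\ve$) where the present problem differs from the literal statement in~\cite{LM00}.
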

\begin{proof} This result is proved in \cite{LM00}. \end{proof}
\begin{rem}
Let us emphasize that for the following, it is essential to know the global (in time) existence of a solution for problem \eqref{EQve}-\eqref{BC}. Other existence theorems have been proved for this problem, for example in \cite{GS90}, \cite{FGO98}, \cite{Chu04}, but these theorems are either local in time (on a time interval $[0,T^\ve]$), or a small data assumption is needed. In this work, these theorems cannot be used, since there is no control on the behavior of $T^\ve$ (or equivalently of the data) when $\ve$ tends to zero, in particular $T^\ve$ may tend to zero. 

\nit Consequently, this work is restricted to the specific case treated in \cite{LM00}, taking one parameter of the Oldroyd model to be zero. In all generality, the non-linear term reads $g(\bs, \nabla \bu)=- W(\bu) \cdot \bs + \bs \cdot W(\bu) -a\lt(\bs \cdot D(\bu)+  D(\bu) \cdot \bs\rt)$, which is called objective derivative. Here the parameter $a$ is taken to be zero. This case corresponds to the so-called Jaumann derivative.
\end{rem}

\begin{rem}
The following computations are made in the two-dimensional case (i.e. $\om=(0,L)$ is a one-dimensional domain) for the sake of simplicity. However, note that except for the regularity obtained for the limit problem in Section \ref{s:reg}, all estimates are independent of the dimension, thus the corresponding computations should apply to the three-dimensional case. 
\end{rem}
 
\begin{paragraph}
{Regularizing the system}
In the proof of the preceding theorem, the existence of a solution is achieved by regularization. Therefore, this study only concerns solutions obtained as the limit of a regularized problem approximating \eqref{EQve}, in which an additional term $-\eta \Delta \hat\bs^\ee$ is added to the Oldroyd equation, with $\eta >0$ a small parameter. Here a regularization of the form $-\eta \Delta (\hat \bs^\ee-\hat{\bds G})$ is chosen, with $\hat{\bds G}$ a symmetric tensor in $\H^2(\hat\Om^\ve)$ independent of $\eta$ and $\ve$ which will be precised later. After obtaining the needed energy estimates uniformly in $\eta$, we will let $\eta$ tend to zero. This approach allows to multiply the Oldroyd equation by $\hat\bs^\ee$, since $\hat\bs^\ee$ is regular enough. Of course, one can choose another regularization which leads to energy estimates which are uniform in the regularization parameter.

\nit Furthermore, because of the regularizing term, boundary conditions on the whole boundary are needed. Let us write $\hat\bs^\ee|_{\pa\hat\Om^\ve} = \hat{\bds\theta}^\ee$, where $\hat{\bds\theta}^\ee$ is now a function of $\H^{1/2}(\pa\hat\Om^\ve)$, which will be determined later by equation~\eqref{deftheta}.
\end{paragraph}

\section{Asymptotic expansions}\label{asymptexp}
\subsection{Renormalization of the domain}
Introducing a new variable $z=\dfrac{y}\ve$, the system \eqref{EQve} can be rewritten in a fixed re-scaled domain: 
\begin{equation*}
\Om=\{(x,z)\in \R^n, x\in \om ~\text{and}~ 0<z<h(x)\}.
\end{equation*}
For a function $\hat f$ defined in $\Om^\ve$, $f$ is defined in $\Om$ by $f(x,z)=\hat f(x,\ve z)$. For a function $f\in L^p(\Om)$, $|f|_p$ still denotes the $L^p$-norm in $\Om$, and similar notations hold for the other norms. Moreover, the regularizing term $\eta\Delta \bs^\ee$ is introduced. 
Denoting $\bs^\ee=\begin{pmatrix} \sigma_{11}^\ee & \sigma_{12}^\ee \\                                                                                     \sigma_{12}^\ee & \sigma_{22}^\ee \end{pmatrix}$, and similar notations for the components of $\bds G$, it holds in $(0,\infty) \times \Omega $~:
% \begin{equation}
% \{\begin{aligned}\label{EQ}
% &\rho ~\delta_t u_1^\ee
% - (1-r) \nu\,\Delta_\ve u_1^\ee + \pa_x p^\ee - \pa_x \sigma_{11}^\ee - \dfrac1\ve \pa_z \sigma_{12}^\ee =0\,,\\
% &\rho ~\delta_t u_2^\ee
% - (1-r) \nu\,\Delta_\ve u_2^\ee + \dfrac1\ve \pa_z p^\ee - \pa_x \sigma_{12}^\ee - \dfrac1\ve \pa_z \sigma_{22}^\ee=0\,,\\
% &\nabla_\ve\cdot \bu^\ee = 0 \,, \\
% &\lambda ~\lt(\delta_t \sigma_{11}^\ee
% -\tilde N(\bu^\ee,\sigma_{12}^\ee)\rt) + \sigma_{11}^\ee -\eta \Delta_\ve (\sigma_{11}^\ee- G_{11})- 2r \nu \pa_x u_1^\ee=0\,, \\
% &\lambda ~\lt(\delta_t \sigma_{12}^\ee
% +\dfrac12\tilde N(\bu^\ee,\sigma_{11}^\ee-\sigma_{22}^\ee)\rt) + \sigma_{12}^\ee -\eta \Delta_\ve (\sigma_{12}^\ee-G_{12})- r \nu \lt(\pa_x u_2^\ee+\dfrac1\ve \pa_z u_1^\ee\rt) =0\,,\\
% &\lambda ~\lt(\delta_t \sigma_{22}^\ee
% +\tilde N(\bu^\ee,\sigma_{12}^\ee)\rt) + \sigma_{22}^\ee -\eta \Delta_\ve (\sigma_{22}^\ee-G_{22})- 2r \nu \dfrac1\ve\pa_z u_2^\ee=0\,,
% \end{aligned} \rt.
% \end{equation}
\begin{System}\label{EQ}
\rho ~\delta_t u_1^\ee
- (1-r) \nu\,\Delta_\ve u_1^\ee + \pa_x p^\ee - \pa_x \sigma_{11}^\ee - \dfrac1\ve \pa_z \sigma_{12}^\ee =0\,,\\
\rho ~\delta_t u_2^\ee
- (1-r) \nu\,\Delta_\ve u_2^\ee + \dfrac1\ve \pa_z p^\ee - \pa_x \sigma_{12}^\ee - \dfrac1\ve \pa_z \sigma_{22}^\ee=0\,,\\
\nabla_\ve\cdot \bu^\ee = 0 \,, \\
\lambda ~\lt(\delta_t \sigma_{11}^\ee
-\tilde N(\bu^\ee,\sigma_{12}^\ee)\rt) + \sigma_{11}^\ee -\eta \Delta_\ve (\sigma_{11}^\ee- G_{11})- 2r \nu \pa_x u_1^\ee=0\,, \\
\lambda ~\lt(\delta_t \sigma_{12}^\ee
+\dfrac12\tilde N(\bu^\ee,\sigma_{11}^\ee-\sigma_{22}^\ee)\rt) + \sigma_{12}^\ee -\eta \Delta_\ve (\sigma_{12}^\ee-G_{12})- r \nu \lt(\pa_x u_2^\ee+\dfrac1\ve \pa_z u_1^\ee\rt) =0\,,\\
\lambda ~\lt(\delta_t \sigma_{22}^\ee
+\tilde N(\bu^\ee,\sigma_{12}^\ee)\rt) + \sigma_{22}^\ee -\eta \Delta_\ve (\sigma_{22}^\ee-G_{22})- 2r \nu \dfrac1\ve\pa_z u_2^\ee=0\,,
\end{System}
where the convective derivative $\delta_t$ is given by $\delta_t = \pa_t + \bu^\ee\cdot \nabla_\ve$. The derivation operators are defined as follows: $\nabla_\ve =\lt(\pa_x , \dfrac1\ve\pa_z \rt)$ and $\Delta_\ve = \pa_x^2 +\dfrac1{\ve^2}\pa_z^2 $. The non-linear terms $\tilde N$ are given by $\tilde N(\bu,f)=\lt(\pa_x u_2-\dfrac1\ve \pa_z u_1\rt) f$.

\subsection{Asymptotic expansions}
It has been proposed in \cite{BCM07} that when $\eta,~\ve$ tend zero, $(\bu^\ee,p^\ee,\bs^\ee)$ tends formally to a triplet $(\bu^*,p^*,\bs^*)$  satisfying a system that will be given later in \eqref{EQ_star}. This analysis leads to the introduction of the following asymptotic expansions:
\allowdisplaybreaks\begin{gather}
 u_1^\ee=u_1^*+v_1^\ee ~\text{and} ~ u_2^\ee=\ve u_2^*+\ve v_2^\ee,
\label{dvasymptU}\\
 p^\ee=\dfrac1{\ve^2}p^*+\dfrac1{\ve^2}q^\ee,\label{dvasymptp}\\
 \bs^\ee=\dfrac1\ve\bs^*+\dfrac1\ve \bt^\ee,\label{dvasympts}
\end{gather}
with $\bs^*=\begin{pmatrix} \sigma_{11}^* & \sigma_{12}^* \\ \sigma_{12}^* & \sigma_{22}^* \end{pmatrix}$, and $\bt^\ee= \begin{pmatrix} \tau_{11}^\ee & \tau_{12}^\ee \\ \tau_{12}^\ee & \tau_{22}^\ee \end{pmatrix}$. If denoting $\bu^*=(u_1^*,u_2^*)$, and $\bv^\ee=(v_1^\ee,v_2^\ee)$, \eqref{dvasymptU} becomes $\bu^\ee = \bu^* + \bv^\ee$. 

\nit The scaling orders chosen for the pressure and the different components of the velocity field and of the stress tensor are motivated by some mathematical and physical remarks. Classically, the pressure has to be of order $1/\ve^2$ if the horizontal velocity is of order 1 (see \cite{BC86} for the rigorous explanation). On the other hand, the stress tensor has to be of order $1/\ve$ and the Deborah number $\lambda$ of order $\ve$ in order to balance the Newtonian and non-Newtonian contribution in Oldroyd equation (see \cite{BCM07}). Hence; let $\lambda=\ve \lambda^*$.

A wise choice of the function $\bds G$ in the regularizing term is $\bds G=\bs^*$. The regularity of $\bds G$ in $\H^2(\Om)$ is proved by Theorem \ref{th:reg} (where it is proved that $\pa_x^2 \bs^*\in \bds\C^0(\bar\Om)$, $\pa_x\pa_z \bs^* \in \bds\C^0(\bar\Om)$ and $\pa_z^2 \bs^* \in \bds{\C}^1(\bar\Om)$, thus $\Delta\bs^* \in \L^2(\Om)$).
A formal substitution of \eqref{dvasymptU}, \eqref{dvasymptp}, \eqref{dvasympts} in \eqref{EQ} leads to the following system:
\begin{equation}
\{\begin{aligned}\label{EQ_rest_star}
&\rho ~\di_t v_1^\ee - (1-r) \nu\,\Delta_\ve v_1^\ee + \dfrac1{\ve^2}\pa_x q^\ee -\dfrac1\ve \pa_x \tau_{11}^\ee - \dfrac1{\ve^2} \pa_z \tau_{12}^\ee= \tilde L_1^\ee+\dfrac1\ve C_1+ \dfrac1{\ve^2} C_1',\\
&\rho ~\di_t v_2^\ee - (1-r) \nu\,\Delta_\ve v_2^\ee + \dfrac1{\ve^4}\pa_z q^\ee -\dfrac1{\ve^2} \pa_x \tau_{12}^\ee - \dfrac1{\ve^3} \pa_z \tau_{22}^\ee = \dfrac1{\ve^2} \tilde L_2^\ee+\dfrac1{\ve^3} C_2+ \dfrac1{\ve^4} C_2',\\
&\nabla \cdot \bv^\ee = \nabla \cdot \bu^*,\\
&\lambda^* ~\lt(\di_t \tau_{11}^\ee -N(\bv^\ee,\tau_{12}^\ee)\rt) + \dfrac1\ve \tau_{11}^\ee -\eta \Delta_\ve \tau_{11}^\ee -2r\nu \pa_x v_1^\ee = \tilde L_{11}^\ee + \dfrac1\ve \tilde L_{11}'^\ee,\\
&\lambda^* ~\lt(\di_t \tau_{12}^\ee +\dfrac12 N(\bv^\ee,\tau_{11}^\ee-\tau_{22}^\ee)\rt) + \dfrac1\ve \tau_{12}^\ee-\eta \Delta_\ve \tau_{12}^\ee -r \nu \lt(\pa_x v_2^\ee+\dfrac1\ve \pa_z v_1^\ee\rt) = \tilde L_{12}^\ee + \dfrac1\ve \tilde L_{12}'^\ee,\\
&\lambda^* ~\lt(\di_t \tau_{22}^\ee +N(\bv^\ee,\tau_{12}^\ee)\rt) + \dfrac1\ve \tau_{22}^\ee -\eta \Delta_\ve \tau_{22}^\ee-\dfrac{2r\nu}\ve \pa_z v_2^\ee = \tilde L_{22}^\ee + \dfrac1\ve \tilde L_{22}'^\ee,
\end{aligned} \rt.
\end{equation}
with the following notations: $\di_t = \pa_t + \bv^\ee\cdot \nabla$ is the so-called convective derivative, the non-linear terms $N(\bv^\ee,f)= \lt(\ve\pa_x v_2^\ee-\dfrac1\ve \pa_z v_1^\ee\rt) f$ for $f\in L^2(\Om)$ and the following linear (with respect to $\bv^\ee$) and constant terms \label{L1C1}
\begin{align*}
\tilde L_1^\ee&= \underbrace{-\rho~\bv^\ee\cdot \nabla u_1^* -\rho~\bu^*\cdot \nabla v_1^\ee}_{\mathcal L_1^\ee} -\rho ~\pa_t u_1^* -\rho ~\bu^*\cdot \nabla u_1^* + (1-r)\nu \pa_x^2 u_1^*,\\
C_1&= \pa_x \sigma_{11}^*,\\
C_1' &= (1-r)\nu \pa_z^2 u_1^*-\pa_x p^* +\pa_z \sigma_{12}^*;\\
% \end{align*}
% \begin{align*}
\tilde L_2^\ee&= \underbrace{-\rho~\ve^2\bv^\ee\cdot \nabla u_2^* - \rho~\ve^2\bu^*\cdot \nabla v_2^\ee}_{\mathcal L_2^\ee} \\
&\qquad -\rho ~\ve^2\pa_t u_2^* -\rho ~\ve^2\bu^*\cdot \nabla u_2^* + \ve^2(1-r)\nu \pa_x^2 u_2^*+(1-r)\nu \pa_z^2 u_2^* + \pa_x \sigma_{12}^*,\\
C_2&= \pa_z \sigma_{22}^*,\\
C_2'& = \pa_z p^*.
\end{align*}
For the Oldroyd equation, the following linear (with respect to $\bv$ and $\bt$) and constant terms appear:
\begin{align*}
\tilde L_{11}^\ee = &\mathcal L_{11}^\ee +\lambda^*\lt(-\pa_t \sigma_{11}^* - \bu^*\cdot \nabla \sigma_{11}^* +\ve \pa_x u_2^* \sigma_{12}^*\rt) + 2r\nu \pa_x u_1^*,\\
\text{with} \quad &\mathcal L_{11}^\ee =\lambda^*\lt(\ve \pa_x u_2^* \tau_{12}^\ee + \ve \pa_x v_2^\ee \sigma_{12}^* -\bv^\ee\cdot \nabla \sigma_{11}^* -\bu^*\cdot \nabla \tau_{11}^\ee \rt),\\
\tilde L_{11}'^\ee =& \underbrace{-\lambda^*\lt(\pa_z u_1^* \tau_{12}^\ee  + \pa_z v_1^\ee \sigma_{12}^*\rt)}_{\mathcal L_{11}'^\ee} -\lambda^* \pa_z u_1^* \sigma_{12}^* -\sigma_{11}^*;\\
\tilde L_{22}^\ee =& \mathcal L_{22}^\ee -\lambda^*\lt(\pa_t \sigma_{22}^* + \bu^*\cdot \nabla \sigma_{22}^* +\ve \pa_x u_2^* \sigma_{12}^*\rt) + 2r\nu \pa_z u_2^*,\\
\text{with} \quad &\mathcal L_{22}^\ee =-\lambda^*\lt(\ve \pa_x u_2^* \tau_{12}^\ee +\ve \pa_x v_2 \sigma_{12}^* +\bv^\ee\cdot \nabla \sigma_{22}^* +\bu^*\cdot \nabla \tau_{22}^\ee \rt),\\
\tilde L_{22}'^\ee =&\underbrace{ \lambda^*\lt(\pa_z u_1^* \tau_{12}^\ee  + \pa_z v_1^\ee \sigma_{12}^*\rt) }_{\mathcal L_{22}'^\ee}+ \lambda^*\pa_z u_1^* \sigma_{12}^* -\sigma_{22}^*
\\% \end{align*}
% \begin{align*}
\tilde L_{12}^\ee =& \underbrace{-\dfrac{\lambda^*}2\lt(\ve \pa_x u_2^* (\tau_{11}^\ee-\tau_{22}^\ee) + \ve \pa_x v_2^\ee (\sigma_{11}^*-\sigma_{22}^*) +2\bv^\ee\cdot \nabla \sigma_{12}^*+2\bu^*\cdot \nabla \tau_{12}^\ee \rt)}_{\mathcal L_{12}^\ee}\\ 
&\qquad \qquad-\dfrac{\lambda^*}2\lt( 2\pa_t \sigma_{12}^*+2\bu^*\cdot \nabla \sigma_{12}^* + \pa_x u_2^*(\sigma_{11}^*-\sigma_{22}^*)\rt) + r\nu \ve \pa_x u_2^*,\\
\tilde L_{12}'^\ee =& \underbrace{-\dfrac{\lambda^*}2\lt(\pa_z u_1^* (\tau_{11}^\ee-\tau_{22}^\ee)  + \pa_z v_1^\ee (\sigma_{11}^*-\sigma_{22}^*)\rt) }_{\mathcal L_{12}'^\ee} +\dfrac{\lambda^*}2\pa_z u_1^* (\sigma_{11}^*-\sigma_{22}^*) -\sigma_{12}^* +r\nu \pa_z u_1^*;
\end{align*}
Note that the first order derivatives of $\bs^*$ occur in the terms $\tilde L^\ee$ and $C^\ee$. 
It will be shown in Theorem \ref{th:reg} that $\bs^*$ has sufficient regularity.

\nit Let us observe also that equations \eqref{EQ_rest_star} are similar to \eqref{EQ}, except for the linear terms on the right. Thus the energy estimates will be obtained similarly for both systems, multiplying Navier-Stokes equation by the velocity and Oldroyd equation by the stress tensor, and integrating over $\Om$.

\section{Limit equations}\label{ssec:limit}
\subsection{Limit system}
In an heuristic way, the following system of equations satisfied by $\bu^*$, $p^*$, $\bs^*$ is infered from \eqref{EQ_rest_star}:
%It is possible to deduce in an heuristic way the following system of equations verified by $\bu^*$, $p^*$, $\bs^*$ from \eqref{EQ_rest_star}.
$\bu^*$, $p^*$, $\bs^*$ are steady functions solutions of:
\begin{System}\label{EQ_star}
(1-r)\nu \pa_z^2 u_1^*-\pa_x p^* +\pa_z \sigma_{12}^*=0,\\
\pa_z p^*=0,\\
\nabla \cdot \bu^* =0,\\
\lambda^* \pa_z u_1^* \sigma_{12}^* + \sigma_{11}^*=0,\\
-\dfrac{\lambda^*}2 \pa_z u_1^*(\sigma_{11}^* -\sigma_{22}^*) + \sigma_{12}^* =r\nu \pa_z u_1^*,\\
-\lambda^* \pa_z u_1^* \sigma_{12}^* + \sigma_{22}^*=0.
\end{System}
This system is equipped with the following boundary condition (Dirichlet condition on the upper and lower part of the boundary, flux imposed on the lateral part of the boundary):
\begin{equation}\label{BC_star}
\begin{cases}
\bu^* =0\,, & \text{for $z=h(x)$},\\
\bu^* =(s,0)\,, & \text{for $z=0$},\\ 
\int\lm_0^{h(x)}\bu^*\,\di z\cdot n=\Phi_0 & \text{on $\Gamma_L$}.
\end{cases}
\end{equation}
The compatibility condition reads $\int\lm_{\pa \om} \Phi_0 =0$.
Moreover, since $p^*$ is defined up to a constant, the mean pressure is taken to be zero: $\intO p^*=0$.

\begin{rem}
 Each equation of the preceding system \eqref{EQ_star} is obtained by cancelling the constant part (i.e. the part independent of $\bv^\ee$, $q^\ee$, $\tau^\ee$) of respectively $C_1'$, $C_2'$, $\nabla \cdot \bu^*$, $\tilde L_{11}'^\ee$, $\tilde L_{12}'^\ee$, $\tilde L_{22}'^\ee$.
\end{rem}
% \begin{rem}
% It is of interest to mention how the preceding system \eqref{EQ_star} is obtained. Cancelling the constant part (i.e. the part independent of $\bv^\ee$, $q^\ee$, $\tau^\ee$) of:
% \begin{compactitem}[$\star$]
%  \item $C_1'$, it follows $(1-r)\nu \pa_z^2 u_1^*-\pa_x p^* +\pa_z \sigma_{12}^*=0$;
% \item $C_2'$, it follows $\pa_z p^*=0$;
% \item $\nabla \cdot \bu^*$, it follows $\nabla \cdot \bu^* =0$;
% \item $\tilde L_{11}'^\ee$, it follows $\lambda^* \pa_z u_1^* \sigma_{12}^* + \sigma_{11}^*=0$;
% \item $\tilde L_{12}'^\ee$, it follows $- \dfrac{\lambda^*}2 \pa_z u_1^*(\sigma_{11}^* -\sigma_{22}^*) + \sigma_{12}^* =r\nu \pa_z u_1^*$;
% \item $\tilde L_{22}'^\ee$, it follows $\lambda^* \pa_z u_1^* \sigma_{12}^* -\sigma_{22}^*=0$.
% \end{compactitem}
% \end{rem}

\subsection{Determination of the boundary conditions}\label{ssec:BC}
\begin{rem}
 The lateral boundary conditions on $\bu^*$ do not depend on the ones on $\bu^\ee$, but only on the flux. Therefore, different boundary conditions on $\bu^\ee$ corresponding to the same flux lead to the same limit problem. This is a classical fact when passing from a two-dimensional problem to a one-dimensional problem (or similarly from a three-dimensional problem to a two-dimensional one), and has already been observed in \cite{BC86} for example. Here, in order to avoid boundary layers, $\bu^\ee=\bu^*$ is imposed on the lateral part of the boundary.

\nit Similarly, any value of $\bs^\ee$ on the boundary leads to the same limit problem. Again, in order to avoid boundary layers, well-prepared boundary conditions are also chosen for $\bs^\ee$.
\end{rem}

The preceding remark allows to define precisely the function $\bds J^\ve$ introduced in \eqref{BC}. Since $\bu^*|_{\Gamma_L}\in \H^{1/2}(\Gamma_L)$,
it is possible to construct $\bds J^\ve\in \H^{1/2}(\pa\Om)$ satisfying $\bds J^\ve|_{z=h}=0$, $\bds J^\ve|_{z=0}=(s,0)$ 
 and $\bds J^\ve|_{\Gamma_L} = \bu^*|_{\Gamma_L}$. 
Therefore, the boundary conditions on $\bu^\ee$ become
\begin{equation*}
 \begin{cases}
  \bu^\ee =0\,, & \text{for $z=h(x)$},\\
\bu^\ee =(s,0)\,, & \text{for $z=0$},\\  
\bu^\ee =\bu^* & \text{on $\Gamma_L$}.
 \end{cases}
\end{equation*}
Thus $\bu^\ee|_{\pa \Om} = \bu^*|_{\pa \Om}$, and $\bv^\ee$ will satisfy zero boundary conditions: $\bv^\ee|_{\pa \Om} =0$.

\nit Moreover, since $\bs^*\in \H^1(\Om)$ (see Theorem \ref{th:reg} for this regularity result), $\bds\theta^\ve$ can be defined as follows: 
\begin{equation}\label{deftheta}
 \bds\theta^\ve = \bs^*|_{\Gamma_+} \in \H^{1/2}(\Gamma_+).
\end{equation}
Therefore 
\begin{equation*}
\bs^\ee|_{\Gamma_+}=\bs^*|_{\Gamma_+},
\end{equation*}
and this implies that $\bt^\ee|_{\Gamma_+}=0$.

\nit On the other part $\Gamma_-$ of the boundary, $\bs^\ee$ is chosen such that $\bs^\ee\cdot n|_{\Gamma_-} = \bs^*\cdot n|_{\Gamma_-}$, for example $\bs^\ee|_{\Gamma_-}=\bs^*|_{\Gamma_-}$.

\subsection{Existence of a solution to the limit problem}\label{s:reg}
System \eqref{EQ_star}-\eqref{BC_star} has already been studied in \cite{BCM07}.
\begin{theo}\label{th:BCM}
Assume that $r<8/9$. Then system \eqref{EQ_star}-\eqref{BC_star} has a unique solution satisfying
\begin{equation}
 \bu^*\in \L^2(\Om),~~\pa_z \bu^* \in \L^2(\Om), ~~ p^* \in H^1(\om),~~\bs^* \in \L^2(\Om).
\end{equation}
\end{theo}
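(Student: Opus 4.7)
The strategy is to decouple \eqref{EQ_star} by eliminating $\bs^*$ algebraically, reducing the whole problem to a pointwise $2\times 2$ nonlinear system for $(\pa_x p^*(x),C(x))$ indexed by $x\in\om$, where $C(x)$ is an integration constant in $z$ arising below; the pressure is then recovered by integration in $x$.

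First, since $\pa_z p^*=0$, $p^*$ depends only on $x$. Summing the fourth and sixth equations of \eqref{EQ_star} gives $\sigma_{11}^*+\sigma_{22}^*=0$, so $\sigma_{22}^*=-\sigma_{11}^*=\lambda^*\sigma_{12}^*\,\pa_z u_1^*$; substituting into the fifth equation yields
\[
\sigma_{12}^* \;=\; \frac{r\nu\,\pa_z u_1^*}{1+(\lambda^*)^2(\pa_z u_1^*)^2}.
\]
The first momentum equation then reduces to the scalar identity $\pa_z F(\pa_z u_1^*)=\pa_x p^*(x)$, with $F(w):=(1-r)\nu\,w + r\nu\,w/(1+(\lambda^*)^2 w^2)$. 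Integrating once in $z$ gives the pointwise relation $F(\pa_z u_1^*)=z\,\pa_x p^*(x)+C(x)$.

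Second, I would verify that $r<8/9$ is exactly what makes $F:\R\to\R$ a $\C^\infty$-diffeomorphism. One computes $F'(w)=(1-r)\nu+r\nu(1-(\lambda^*)^2 w^2)/(1+(\lambda^*)^2 w^2)^2$; the second summand attains its minimum $-r\nu/8$ at $(\lambda^*)^2 w^2=3$, so $F'(w)\geq\nu(8-9r)/8>0$ exactly when $r<8/9$, and $F^{-1}$ exists and is smooth. Using the lower boundary condition $u_1^*(x,0)=s$,
\[
u_1^*(x,z) \;=\; s + \int_0^z F^{-1}\bigl(\xi\,\pa_x p^*(x)+C(x)\bigr)\,\di\xi.
\]
The upper boundary condition $u_1^*(x,h(x))=0$ provides one scalar equation at each $x$; a second comes from the fact that incompressibility together with $u_2^*(x,0)=u_2^*(x,h(x))=0$ forces $x\mapsto\int_0^{h(x)}u_1^*\,\di z$ to be constant, with value $\Phi$ prescribed via the lateral flux datum $\Phi_0$ (the compatibility $\int_{\pa\om}\Phi_0=0$ making this consistent).

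The main obstacle is then to prove that the resulting $2\times 2$ nonlinear system
\[
\int_0^{h(x)} F^{-1}(\xi a+b)\,\di\xi = -s, \qquad \int_0^{h(x)}(h(x)-\xi)\,F^{-1}(\xi a+b)\,\di\xi = \Phi - s\,h(x),
\]
in the unknowns $(a,b)=(\pa_x p^*(x),C(x))$, admits a unique smooth solution for every $x$. The Jacobian determinant of the left-hand side can be symmetrised as
\[
\frac{1}{2}\int_0^{h(x)}\!\!\int_0^{h(x)}(\xi_1-\xi_2)^2\,(F^{-1})'(\xi_1 a+b)\,(F^{-1})'(\xi_2 a+b)\,\di\xi_1\,\di\xi_2,
\]
which is strictly positive since $(F^{-1})'>0$; this gives local invertibility everywhere, and a coercivity estimate as $|(a,b)|\to\infty$, coming from the asymptotic linear growth of $F^{-1}$, upgrades this to a global diffeomorphism by a Hadamard-type theorem. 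Once $(\pa_x p^*,C)$ is obtained, $p^*\in H^1(\om)$ is recovered by integration and mean-zero normalisation, $u_1^*$ from the formula above, $u_2^*$ from incompressibility, and $\bs^*$ from the algebraic relations, yielding all the regularities claimed.
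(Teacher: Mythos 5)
Your argument is correct and takes a genuinely different route from the paper, which for this particular theorem simply cites \cite{BCM07} rather than giving a proof. The reduction you use, eliminating $\bs^*$, inverting the monotone constitutive function $F$ (the paper's $\phi$), and expressing $u_1^*$ through $F^{-1}$ of an affine-in-$z$ quantity, is essentially the same one the paper carries out later when proving the \emph{regularity} Theorem \ref{th:reg}. The difference lies in how the flux constraint is handled. The paper first pins down the integration constant $\kappa=K(h,q,s)$ from the upper boundary condition alone, then \emph{differentiates} the flux identity in $x$ to obtain a scalar ODE $U(x,q)\,q'(x)=-V(x,q)$ solved by Picard--Lindel\"of, and ensures $U\neq 0$ via crude two-sided bounds on $\psi'=(F^{-1})'$, which is where the extra restriction $r<2/9$ of Lemma \ref{signU} comes from. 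You instead keep the two constraints as a pointwise (in $x$) $2\times 2$ algebraic system for $(a,b)=(\pa_x p^*,C)$ and show its Jacobian is a manifestly positive symmetrised double integral, which needs only $(F^{-1})'>0$, i.e.\ the full hypothesis $r<8/9$. This is not only cleaner but also sharper: one can check that the paper's $U$ equals $-J/B$ with $J$ your Jacobian and $B=\int_0^h(F^{-1})'>0$, so $U<0$ follows already from Cauchy--Schwarz (hence under $r<8/9$), and the paper's stronger assumption is an artifact of the cruder bound. The one place you should flesh out before this is airtight is properness of $(a,b)\mapsto(P,Q)$: for example along $a\to+\infty$, $b\sim -ah/2$ the first component stays bounded (by oddness of $F^{-1}$), so one must verify that the second component diverges; the linear growth of $F^{-1}$ is indeed the right ingredient, but ``coercivity'' as stated is an assertion, not a proof, and the Hadamard--L\'evy theorem cannot be applied until that check is done.
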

\begin{proof}
This result has been proved in  \cite{BCM07}.
\end{proof}

\nit This existence result is not sufficient for this study. Therefore, the following stronger regularity result is proved on the limit problem \eqref{EQ_star}-\eqref{BC_star}.
\begin{theo}\label{th:reg}
 Assume $r<2/9$. If $h\in H^k(\om)$, for $k\in \mathbb{N}^*$, then the unique solution $(\bu^*,p^*,\bs^*)$ of the system \eqref{EQ_star}-\eqref{BC_star} satisfies
\begin{equation}\label{regk}
\begin{gathered}
 p^* \in \C^{k+1}(\bar\om),\quad u_1^*,\,\pa_z u_1^*,\,\pa_z^2 u_1^*\in \C^{k+1}(\bar\Om), \quad \bs^*,\, \pa_z \bs^* \in \bds\C^{k+1}(\bar\Om),\\
\pa_x u_1^* \in \C^k(\bar\Om), \quad u_2^*,\,\pa_z u_2^*, \pa_z^2 u_2^* \in \C^k(\bar\Om), \quad\pa_x \bs^* \in  \bds\C^k(\bar\Om),\\
\pa_x u_2^* \in \C^{k-1}(\bar\Om).
\end{gathered}
\end{equation}
\end{theo}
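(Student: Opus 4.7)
The strategy is to reduce system \eqref{EQ_star} to a pair of algebraic equations for $P(x):=\pa_x p^*(x)$ and an auxiliary function $A(x)$, parametrized by $h(x)$, and then bootstrap regularity through the implicit function theorem. First I would eliminate the stress tensor algebraically: the fourth and sixth equations give $\sigma_{11}^*=-\sigma_{22}^*=-\lambda^*\pa_z u_1^*\,\sigma_{12}^*$, and substituting into the fifth equation yields the closed-form expression
\[
\sigma_{12}^*=\frac{r\nu\,\pa_z u_1^*}{1+(\lambda^*)^2(\pa_z u_1^*)^2}.
\]
The second equation forces $p^*=p^*(x)$, and the first then rewrites as $\pa_z F(\pa_z u_1^*)=P(x)$, where $F(\xi):=(1-r)\nu\xi+r\nu\xi/(1+(\lambda^*)^2\xi^2)$. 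A direct computation gives $F'(\xi)\ge\nu(1-9r/8)$, so under $r<2/9$ the map $F$ is a smooth diffeomorphism of $\R$ with smooth inverse $\Phi:=F^{-1}$.

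Integrating in $z$ twice and using $u_1^*(x,0)=s$ then yields the representation
\[
u_1^*(x,z)=s+\int_0^z\Phi(P(x)\zeta+A(x))\,d\zeta.
\]
The Dirichlet condition $u_1^*(x,h(x))=0$ and the flux condition (which, via $\nabla\cdot\bu^*=0$ and the vanishing of $u_2^*$ at $z=0$ and $z=h$, is equivalent to prescribing a constant value of $\int_0^{h(x)}u_1^*\,dz$) give a system $G_1(P,A;h)=0$, $G_2(P,A;h)=0$ of two nonlinear algebraic equations for $(P(x),A(x))$. Writing $I_j:=\int_0^h\zeta^j\Phi'(P\zeta+A)\,d\zeta$, the Jacobian determinant with respect to $(P,A)$ simplifies to $I_0I_2-I_1^2$, which is strictly positive by the Cauchy--Schwarz inequality applied with the positive weight $\Phi'$ and the non-constant function $\zeta$ on $[0,h]$. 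The implicit function theorem then produces $(P,A)$ as a smooth function of $h(x)$, and the ``differential Cauchy system'' mentioned in the introduction can be obtained by differentiating $G_1=G_2=0$ in $x$.

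Regularity is then bootstrapped from $h$: once $(P,A)$ is established in the appropriate class, $p^*=\int P$ inherits $\C^{k+1}(\bar\om)$ regularity, and $x$- or $z$-derivatives of the representation formula deliver the announced regularity for $u_1^*,\pa_z u_1^*,\pa_z^2 u_1^*$; the stress components follow from the closed-form algebraic relations above, and $u_2^*$ is recovered by integrating $\pa_z u_2^*=-\pa_x u_1^*$ with $u_2^*(x,0)=0$, an operation which loses one $x$-derivative and accounts for the asymmetric counts in \eqref{regk}. The main technical obstacle I anticipate is the uniform management of this bootstrap: successive $x$-derivatives of the implicit relations produce increasingly complicated nonlinear combinations of $\Phi^{(j)}(P\zeta+A)$ that must be controlled uniformly on $\bar\om$, and it is this --- rather than the mere invertibility of $F$ --- that forces the stronger smallness assumption $r<2/9$ in place of the $r<8/9$ sufficient for existence in Theorem \ref{th:BCM}.
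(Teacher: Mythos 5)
Your approach is essentially the paper's: algebraically eliminate the stress to obtain $\sigma_{12}^*=r\nu\pa_z u_1^*/(1+\lambda^{*2}(\pa_z u_1^*)^2)$, invert $\phi$ (the paper's name for your $F$; the bound $\phi'\ge\nu(1-9r/8)$ requires only $r<8/9$), represent $u_1^*$ as $s+\int_0^z\psi(q(x)t+\kappa(x))\,dt$, and close with the Dirichlet and flux conditions. The one structural difference is in how the closure is exploited. The paper first determines $\kappa=K(h,q,s)$ from the Dirichlet condition alone by a one-variable implicit function theorem, then differentiates the flux condition in $x$ to get a first-order ODE $U(x,q)\,q'=-V(x,q)$ and bootstraps via Picard--Lindel\"of, the crux being Lemma~\ref{signU} that $U$ never vanishes. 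You treat both conditions simultaneously as a $2\times 2$ system for $(P,A)=(q,\kappa)$ and observe that the Jacobian determinant equals $I_0I_2-I_1^2>0$ by Cauchy--Schwarz applied to the positive weight $\psi'$ and the functions $1,\,\zeta$ on $(0,h)$. This is a genuine improvement, not a cosmetic reshuffle: in the paper's notation $\pa_q K=-I_1/I_0$ and $U=-I_2-\pa_qK\,I_1=-(I_0I_2-I_1^2)/I_0$, so your identity proves Lemma~\ref{signU} in one line, whereas the paper sandwiches $\psi'$ between $m=1/\nu$ and $M=1/(\nu(1-9r/8))$ and needs $m/3-M/4>0$, which is precisely where the extra restriction $r<2/9$ enters. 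In other words, your argument shows that the hypothesis $r<2/9$ in Theorem~\ref{th:reg} can be relaxed to $r<8/9$.

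Your closing sentence, however, mis-attributes the source of $r<2/9$. It is not needed to ``control increasingly complicated nonlinear combinations of $\Phi^{(j)}(P\zeta+A)$'' in the bootstrap: once non-degeneracy of the Jacobian (equivalently $U\ne0$) is secured, $\psi$ and $K$ are $\mathcal C^\infty$ and bounded with all derivatives on the relevant compact sets, so the iteration $q\in H^j\Rightarrow q'\in H^{j}$ (while $j\le k$) proceeds with no further smallness. The restriction appears in the paper solely because of the crude $m,M$-bounds in the proof of Lemma~\ref{signU}; your own Cauchy--Schwarz observation makes it unnecessary.
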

\begin{proof}
\nit Let us observe that system \eqref{EQ_star} can be expressed as a system on $u_1^*$, $p^*$ only. Using~\eqref{EQ_star}, $\sigma_{11}^*$, $\sigma_{22}^*$ can be expressed as functions of $\sigma_{12}^*$ and $\pa_z u_1^*$. Indeed, from the fourth and the last equations of \eqref{EQ_star}, it holds that
\begin{equation}\label{alphagamma}
 \sigma_{22}^* = -\sigma_{11}^* = \lambda^* \pa_z u_1^* \sigma_{12}^*.
\end{equation}
Moreover, the divergence-free equation can be rewritten in order to eliminate $u_2^*$. Integrating this equation between $z=0$ and $z=h$, and using the fact that $u_2^*|_{z=0}=u_2^*|_{z=h}=u_1^*|_{z=h}=0$, it follows:
\begin{equation}\label{div_star}
\pa_x \lt( \int\lm_0^{h} u_1^* \,\di z \rt)=0.
\end{equation}
 Thus, the system in $u_1^*$, $p^*$ can be written in the following form:
\begin{equation}\label{EQ_reg}
\{ \begin{aligned}
& -\nu (1-r) \partial_z^2 u_1^* -\partial_z \sigma_{12}^* + \pa_x p^*=0, \qquad \text{with} \quad \sigma_{12}^*=\frac{\nu r \partial_z u_1^*}{1+\lambda^{*2}|\partial_z u_1^*|^2},\\
&\partial_z p^*=0,\\
& \pa_x \lt(\int\lm_0^{h} u_1^* \,\di z\rt)=0,
\end{aligned} \right.
\end{equation}
equipped with the boundary conditions stated in \eqref{BC_star} and the condition $\intO p^* =0$.

\nit For the sake of readability, the superscripts $^*$ are omitted in the rest of this section.

\nit Denote $q=\pa_x p$. Let $\phi \in \mathcal C^\infty(\R)$ defined by $\phi(t)=\nu (1-r) t + \dfrac{\nu r t}{1+\lambda^2t^2}$.
The first equation of \eqref{EQ_reg} becomes $q=\pa_z (\phi(\pa_z u_1))$.

\nit A simple study of function $\phi$ allows to show the following properties:
\begin{equation}\label{ptesphi}
0<\nu\lt(1-\frac{9r}8\rt) < |\phi'|_\infty<\nu , \qquad \text{and} \qquad \phi(t)\xrightarrow[t\rightarrow \pm \infty]{} \pm \infty.
\end{equation} 

\nit Therefore the function $\phi$ is invertible, and $\psi= \phi^{-1}$ belongs to $\mathcal C^\infty(\R)$. Moreover, $\psi$ is an increasing function as $\phi$. Integrating $ q=\pa_z (\phi(\pa_z u_1))$ with respect to $z$ between 0 and $z$, the first equation of \eqref{EQ_reg} becomes:
\begin{equation*}
 \phi(\pa_z u_1(x,z)) = q(x) \,z + \kappa(x),
\end{equation*}
where $\kappa(x)$ is a integration constant.
Therefore, it follows that
\begin{equation*}
 \pa_z u_1(x,z) = \psi(q(x)\, z +\kappa(x)).
\end{equation*}
Since $u_1|_{z=0}=s$, the integration between 0 and $z$ of the preceding equation yields:%and thus, integrating again between 0 and $z$, since $u_1|_{z=0}=s$, we get:
\begin{equation}\label{expr_u}
 u_1(x,z)=s+ \int_0^z \psi(q(x)t + \kappa(x) ) \di t.
\end{equation}
The boundary condition $u_1|_{h(x)} =0$ implies also:
\begin{equation}\label{expr_int}
 \int_0^{h(x)} \psi(q(x)t+\kappa(x)) +s=0.
\end{equation}
For $(h,q,s,\kappa) \in \R^4$, let us introduce $F(h,q,s,\kappa)=\ds\int_0^{h} \psi(qt+\kappa) +s$.
\begin{lem}
For any $(h,q,s)\in \R^3$ there exists an unique $\kappa\in \R$ such that $F(h,q,s,\kappa)=0$.
\end{lem}
\begin{proof}
\begin{itemize}
 \item If such an $\kappa$ exists, it is unique from the implicit function theorem, since for all $(h,q,s,\kappa) \in~\R^4$
$$\dfrac{\pa F}{\pa \kappa}(h,q,s,\kappa) = \ds\int_0^h \psi'(qt+\kappa) \di t >0.$$ 
\item The following limits are computed, using the fact that $\lim\lm_{t\rightarrow \pm\infty} \psi(t)=\pm\infty$:
\begin{equation*}
 \lim_{\kappa\rightarrow +\infty} F(h,q,s,\kappa)=+\infty \quad \text{and} \quad \lim_{\kappa\rightarrow -\infty} F(h,q,s,\kappa)=-\infty.
\end{equation*}
Therefore, there exists $\kappa\in \R$ such that $F(h,q,s,\kappa)=0$. Let us denote $K(h,q,s)=\kappa$. By the implicit function theorem, $K\in \mathcal C^\infty(\R^3)$.
\end{itemize}
\end{proof}
Therefore, the following expression holds for $(h,q,s)\in \R^3$:
\begin{equation}\label{expr_F}
 F(h,q,s,K(h,q,s))=0.
\end{equation}
It is now possible to obtain an information on the sign of $\pa_q K$. Indeed, deriving the expression \eqref{expr_F} with respect to $q$, it follows
\begin{equation*}
\pa_q F + \pa_\kappa F \, \pa_q K = 0.
\end{equation*}
For $h>0$, since $\pa_q F = \ds\int_0^h t\psi'(qt+\kappa) \di t >0$ and $\pa_a F = \ds\int_0^h \psi'(qt+\kappa) \di t >0$, $\pa_q K$ is strictly negative.

\nit Now, using equation \eqref{div_star} and the expression \eqref{expr_u} for $u$, it follows:
\begin{align*}
 &\int_0^{h(x)} \int_0^z \pa_x \Big(\psi(q(x)t+K(h(x),q(x),s)) \Big) \di t \,\di z=0.\\
\intertext{or if changing the direction of integration}
 &\int_0^{h(x)} (h(x)-t) \pa_x \Big(\psi(q(x)t+K(h(x),q(x),s)) \Big) \di t=0.
\end{align*}
This can be rewritten as
\begin{equation}
\begin{split}
 q'(x) &\int_0^{h(x)} (h(x)-t) \, \Big((t+\pa_q K(h(x),q(x),s)\Big)\, \psi'\big(q(x)t + K(h(x),q(x),s)\big) \di t \\
&=- \int_0^{h(x)} (h(x)-t) \, \Big(h'(x)\pa_h K(h(x),q(x),s)\Big) \,\psi'\big(q(x)t + K(h(x),q(x),s)\big) \di t,
\end{split}\nonumber
\end{equation}
which can be seen as an ordinary differential equation in $q$. Let 
\begin{equation}
\begin{split}
U (x,q)&=\int_0^{h(x)} \Big(h(x)-t\Big) \, \Big(t+\pa_q K(h(x),q,s)\Big)\, \psi'\big(qt + K(h(x),q,s)\big) \di t,\\
V(x,q)&=\int_0^{h(x)} \Big(h(x)-t\Big) \, \Big(h'(x)\pa_h K(h(x),q,s)\Big) \,\psi'\big(qt + K(h(x),q,s)\big) \di t.
\end{split}\nonumber 
\end{equation}
The differential equation becomes $U(x,q(x)) \, q'(x) = -V(x,q(x))$ for $x\in \om$.
Note that this equation is in some sense a generalized Reynolds equation for the pressure.
\begin{lem}\label{signU}
 Let $r<2/9$. Then $U(x,q)<0$ for any $(x,q)\in \om \times\R$.
\end{lem}
\begin{proof}
Let $(x,q) \in \om \times \R$. Equation \eqref{expr_int} and the definition \eqref{expr_F} of $K$ imply:
\begin{equation*}
\int_0^{h(x)} \psi(qt + K(h(x),q,s) ) \di t = -s,
\end{equation*}
which becomes, after derivation with respect to $q$
\begin{equation}\label{expr_interm}
 \int_0^{h(x)} \Big(t+\pa_q K(h(x),q,s)\Big) \psi'\big(qt + K(h(x),q,s) \big) \di t = 0.
\end{equation}
With the notation $K'(x,q)=\pa_q K(h(x),q,s)$, \eqref{expr_interm} implies
\begin{equation*}
 K'(x,q)=-\frac{\ds\int_0^{h(x)} t \, \psi'\big(qt + K(h(x),q,s)\big) \di t}{\ds\int_0^{h(x)} \psi'\big(qt + K(h(x),q,s)\big) \di t}.
\end{equation*}
Now, using this expression, $U(x,q)$ can be simplified:
\begin{equation}\label{expr_U}
 U (x,q)=\int_0^{h(x)} -t \, \Big(t+\pa_q K(h(x),q,s)\Big)\, \psi'\big(qt + K(h(x),q,s)\big) \di t.
\end{equation}
Recalling the estimate of $|\phi|_\infty$ in \eqref{ptesphi}, it follows that for any $t\in \R$:
\begin{equation*}
 \dfrac1\nu < \psi'(t)= \dfrac1{\phi'(\psi(t)} < \dfrac1{\nu (1-9r/8)}
\end{equation*}
Let $m= \dfrac1\nu$, $M=\dfrac1{\nu (1-9r/8)}$.
Then 
\begin{equation*}
 -\dfrac{bh(x)}{2m}\leq K'(x,q)\leq -\dfrac{ah(x)}{2M}.
\end{equation*}
Now, \eqref{expr_U} implies that:
\begin{align*}
h(x)^3\lt(\frac{m}3 -\frac{M}4\rt)&= \int_0^{h(x)} tm\lt(t-\frac{Mh(x)}{2m}\rt) \\
&\leq -U(x,q) \leq \int_0^{h(x)} tM\lt(t-\frac{mh(x)}{2M}\rt) =h(x)^3\lt(\frac{M}3 -\frac{m}4\rt).
\end{align*}
In order to prove that $U$ remains strictly negative, it suffices to prove that $0<\dfrac{m}3 -\dfrac{M}4$, i.e. that $\dfrac{m}M > \dfrac34$, which is satisfied under the condition $r<\dfrac29$.
\end{proof}

\nit It is possible to apply Picard-Lindel\"of theorem (or Cauchy-Lipschitz theorem) to the ordinary differential equation $-U(x,q(x))\,q'(x)=V(x,q(x))$, as $U$ remains strictly negative by Lemma \ref{signU}. Since $\psi $ and $K$ are $\mathcal C^\infty$-functions, the regularity of $q'$ is determined by the regularity of $q$ and $h$. By hypothesis, $h$ belongs to $H^k(\om)$, with $k\in \mathbb{N}$, hence $h\in L^2(\om)$. Moreover, Theorem \ref{th:BCM} implies that $q\in L^2(\om)$.  Thus $q'\in L^2(\om)$, which means $q\in H^1(\om)$.

\nit Iterating this process as long as $h$ is regular, $h\in H^k(\om)$ and $q \in H^k(\om)$ implies that $q'\in H^k(\om)$, thus $\pa_x p=q\in H^{k+1}(\om)$, and $p\in H^{k+2}(\om)$. By the classical Sobolev embedding, $p$ belongs to $\mathcal C^{k+1}(\bar\om)$. %Note that this embedding is true when $n=2$ or $n=3$, since $\om$ is a $(n-1)$-dimensional domain.

\nit Last, recalling the expression \eqref{expr_u}, it follows that $u_1\in \mathcal C^{k+1}(\bar \om)$, and, taking the first and second derivatives of \eqref{expr_u} with respect to $z$, that $\pa_z u_1,~ \pa_z^2 u_1$ also belong to $\mathcal C^{k+1}(\bar \om)$.

\nit As observed in the introduction of the proof, $\bs$ and $u_2$ are given as functions of $p$, $u_1$, and the needed regularity follows.
\end{proof}

\begin{rem}
 Since in practical applications, $h$ is very regular ($h\in C^\infty(\bar\om)$), the preceding theorem gives as much regularity as wanted. In particular, the following result will be useful subsequently.
\end{rem}

\begin{cor}
 Assume $r<2/9$. If $h\in H^1(\om)$, then the unique solution $(\bu^*,p^*,\bs^*)$ of the system \eqref{EQ_star}-\eqref{BC_star} satisfies
\begin{equation}\label{reg}
\begin{gathered}
 p^* \in \C^2(\bar\om),\quad u_1^*,\,\pa_z u_1^*,\,\pa_z^2 u_1^*\in \C^2(\bar\Om), \quad \bs^*,\, \pa_z \bs^* \in \bds\C^2(\bar\Om),\\
\pa_x u_1^* \in \C^1(\bar\Om), \quad u_2^*,\,\pa_z u_2^*, \pa_z^2 u_2^* \in \C^1(\bar\Om), \quad\pa_x \bs^* \in  \bds\C^1(\bar\Om),\\
\pa_x u_2^* \in \C^0(\bar\Om).
\end{gathered}
\end{equation}
\end{cor}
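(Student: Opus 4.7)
The plan is to recognize this corollary as the specialization of Theorem~\ref{th:reg} at $k=1$, and simply invoke that theorem. Concretely, I would first check that $1\in \mathbb{N}^*$, so that the hypothesis $h\in H^1(\om)$ falls within the scope of Theorem~\ref{th:reg}. Then I would substitute $k=1$ into the conclusion \eqref{regk}: the exponent $k+1$ becomes $2$, the exponent $k$ becomes $1$, and $k-1$ becomes $0$. A line-by-line comparison with \eqref{reg} shows that each regularity claim of the corollary appears verbatim in \eqref{regk} evaluated at $k=1$, so no further argument is required.

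There is no real mathematical obstacle here, the corollary being a pure book-keeping specialization of a previously proved theorem. The only point to be careful about is that the hypothesis $r<2/9$ is the same in both statements, so the conditional part of Theorem~\ref{th:reg} transfers without change. I would also briefly note that the restriction to $k=1$ is genuinely the minimal one compatible with the proof scheme of Theorem~\ref{th:reg}: its iterative bootstrap $h,q\in H^k(\om)\Rightarrow q\in H^{k+1}(\om)$ is initialized by $q\in L^2(\om)$ coming from Theorem~\ref{th:BCM}, and a single iteration already lifts everything one Sobolev degree, which is what \eqref{reg} records.

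The reason it is worth stating the corollary separately is that this weakest case already yields enough smoothness for the sequel of the paper: from $\pa_z^2 \bs^*\in \bds\C^1(\bar\Om)$ together with $\pa_x^2 \bs^*,\,\pa_x\pa_z \bs^*\in \bds\C^0(\bar\Om)$ one deduces $\Delta \bs^*\in \L^2(\Om)$, which is exactly the property announced in Section~\ref{asymptexp} to legitimize the choice $\bds G=\bs^*$ in the regularizing term $-\eta\,\Delta(\hat\bs^\ee-\bds G)$. Packaging the specialization $k=1$ into a self-contained statement therefore spares the reader from tracking the parameter $k$ through the convergence analysis in Section~\ref{remainders}.
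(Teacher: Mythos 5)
Your proof is correct and coincides with the paper's own, which also simply observes that the corollary is the case $k=1$ of Theorem~\ref{th:reg}. The additional commentary you supply on the role of this corollary in Section~\ref{asymptexp} is accurate but not part of the proof itself.
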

\begin{proof}
 It suffices to take $k=1$ in the preceding theorem \ref{th:reg}.
\end{proof}

\section{Convergence of the remainders}\label{remainders}

\subsection{Equations on the remainders}
From now on, the superscript $^\ee$ are dropped although the functions still depend on $\ve$ and $\eta$. Using the equations \eqref{EQ_star}, system \eqref{EQ_rest_star} becomes
\begin{EqSystem*}[EQ_rest]
 \rho ~\di_t v_1 - (1-r) \nu\,\Delta_\ve v_1 + \dfrac1{\ve^2}\pa_x q -\dfrac1\ve \pa_x \tau_{11} - \dfrac1{\ve^2} \pa_z \tau_{12}= L_1+\dfrac1\ve C_1,\\
\rho ~\di_t v_2 - (1-r) \nu\,\Delta_\ve v_2 + \dfrac1{\ve^4}\pa_x q -\dfrac1{\ve^2} \pa_x \tau_{12} - \dfrac1{\ve^3} \pa_z \tau_{22} = \dfrac1{\ve^2} L_2+\dfrac1{\ve^3} C_2,\\
\nabla \cdot \bv = 0,\\
\lambda^* \di_t \tau_{11} -\lambda^* N(\bv,\tau_{12}) + \dfrac1\ve \tau_{11} -\eta \Delta_\ve \tau_{11}-2r\nu \pa_x v_1=  L_{11} + \dfrac1\ve  L_{11}'+\eta\Delta_\ve \sigma_{11}^* ,\\
\lambda^* \di_t \tau_{12} +\dfrac{\lambda^*}2 N(\bv,\tau_{11}-\tau_{22})\ + \dfrac1\ve \tau_{12} -\eta \Delta_\ve \tau_{12}-r \nu \lt(\pa_x v_2+\dfrac1\ve \pa_z v_1\rt) =  L_{12} + \dfrac1\ve  L_{12}'+\eta\Delta_\ve \sigma_{12}^*,\\
\lambda^* \di_t \tau_{22} +\lambda^* N(\bv,\tau_{12}) + \dfrac1\ve \tau_{22} -\eta \Delta_\ve \tau_{22}-\dfrac{2r\nu}\ve \pa_z v_2 =  L_{22} + \dfrac1\ve L_{22}'+\eta\Delta_\ve \sigma_{22}^*,
\end{EqSystem*}
with the new quantities
\allowdisplaybreaks\begin{align*}
L_1&= \mathcal L_1-\rho ~\bu^*\cdot \nabla u_1^* + (1-r)\nu \pa_x^2 u_1^*,\\
L_2&=\mathcal L_2-\rho ~\ve^2\bu^*\cdot \nabla u_2^* + (1-r)\nu \pa_x^2 u_2^*+(1-r)\nu \pa_z u_2^* + \pa_x \sigma_{12}^*,\\
L_{11}& = \mathcal L_{11} + \lambda^*\lt(- \bu^*\cdot \nabla \sigma_{11}^* +\ve \pa_x u_2^* \sigma_{12}^*\rt) + 2r\nu \pa_x u_1^*,\\
L_{11}'& =\mathcal L_{11}',\\
L_{12} &= \mathcal L_{12}-\dfrac{\lambda^*}2\lt( 2\bu^*\cdot \nabla \sigma_{12}^*+\pa_x u_2^*(\sigma_{11}^*-\sigma_{22}^*)\rt) + r\nu \ve \pa_x u_2^*,\\
L_{12}' &= \mathcal L_{12}',\\
L_{22} &= \mathcal L_{22} -\lambda^*\lt( \bu^*\cdot \nabla \sigma_{22}^* +\ve \pa_x u_2^* \sigma_{12}^*\rt) + 2r\nu \pa_z u_2^*,\\
L_{22}' &= \mathcal L_{22}'.
\end{align*}
and with the initial and boundary conditions 
\begin{equation}\label{BCIC}
\bv|_{t=0} = \bu_0-\bu^*,\quad  ~\bt|_{t=0} = \bs_0-\bs^*,\quad ~\bv|_{\pa \Om} =0, \quad ~\bt|_{\Gamma_+} =0.
\end{equation}
Let us observe that both initial conditions $\bv|_{t=0}$ and $\bt|_{t=0}$ belong to $\L^2(\Om)$.
$\bv$, $q$ and $\bt$ are defined by \eqref{dvasymptU}, \eqref{dvasymptp}, \eqref{dvasympts}. From the existence theorem \ref{th:LM} for $(\bu,p,\bs)$ and theorem \ref{th:BCM} for $(\bu^*,p^*,\bs^*)$, it follows that system \eqref{EQ_rest} admits a solution $(\bv,q,\bt)\in L^2(0,\infty,\H^1(\Om)) \times L^2(0,\infty,L^2(\Om)) \times \mathcal C (0,\infty,\L^2(\Om)) $  for $r<8/9$.

\subsection{Convergence of $\bv$ and $\bt$}
Before starting the a priori estimates, let us explain how the non-linear terms in \eqref{EQ_rest} are handled. The non-linear terms $\bv\cdot \nabla\bv$ of Navier-Stokes equation and $\bv\cdot \nabla \bt$ of Oldroyd equation are treated with the following Lemma \ref{lm:convct}. On the other hand, the non-linear terms $N(\bv,\bt)=\lt(\ve \pa_x v_2 -\frac1\ve \pa_z v_1\rt)\bt	$ in \eqref{EQ_rest:d}-\eqref{EQ_rest:f} are zero when multiplied by $\bt$.
\begin{lem}\label{lm:convct}
Let $\bn$ be the exterior normal of the domain $\Om$. Let $\bds\phi\in \H^1(\Om)$ be a vector field satisfying $\nabla \cdot \bds\phi=0$ and $\bds\phi\cdot \bn|_{\pa \Om}=0$. Let $w\in H^1(\Om)$.
Then 
\begin{equation*}
 \intO \bds\phi\cdot \nabla w \,w =0.
\end{equation*}
\end{lem}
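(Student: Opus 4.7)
The strategy is to recognise the integrand as (up to a factor $\tfrac12$) a pure divergence, and then conclude by Green's formula together with the two hypotheses on $\bds\phi$.

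First, I would use the pointwise chain-rule identity
\begin{equation*}
\bds\phi\cdot \nabla w \,\, w = \tfrac12\, \bds\phi\cdot \nabla(w^2),
\end{equation*}
which in combination with $\nabla\cdot \bds\phi=0$ can be rewritten as a divergence,
\begin{equation*}
\bds\phi\cdot \nabla w \,\, w = \tfrac12\, \nabla\cdot(w^2\,\bds\phi).
\end{equation*}
Second, I would integrate this identity over $\Om$ and invoke the divergence theorem to get
\begin{equation*}
\intO \bds\phi\cdot \nabla w \,\, w = \tfrac12\int_{\pa\Om} w^2\, \bds\phi\cdot\bn,
\end{equation*}
which vanishes directly from the boundary hypothesis $\bds\phi\cdot \bn|_{\pa\Om}=0$, yielding the claim.

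The only point that deserves a little care is the justification of the integration by parts at the stated regularity level. Since $\Om$ is a Lipschitz domain in dimension $n\le 3$, the Sobolev embedding $H^1(\Om)\hookrightarrow L^4(\Om)$ gives $w^2\in H^1(\Om)$ with $\nabla(w^2)=2w\nabla w\in \L^2(\Om)$, while $\bds\phi\in \H^1(\Om)\subset \L^4(\Om)$. Hence $w^2\,\bds\phi\in \L^2(\Om)$ and $\nabla\cdot(w^2\,\bds\phi)\in L^1(\Om)$, so Green's formula applies; the boundary trace is well defined because $w|_{\pa\Om}\in H^{1/2}(\pa\Om)\subset L^4(\pa\Om)$ and $\bds\phi\cdot \bn|_{\pa\Om}\in H^{1/2}(\pa\Om)$. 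If one prefers to avoid any regularity bookkeeping, the same conclusion is obtained by approximating $w$ by a sequence $w_k\in \mathcal{C}^\infty(\bar\Om)$ converging to $w$ in $H^1(\Om)$, performing the trivial computation for each smooth $w_k$, and passing to the limit using continuity of the trace operator. No genuine obstacle arises; the lemma is essentially a restatement of the classical skew-symmetry of the convective term $(\bds\phi\cdot\nabla)\cdot$ against $L^2$-inner product whenever $\bds\phi$ is tangent to the boundary and solenoidal.
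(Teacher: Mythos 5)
Your argument is correct and is essentially the paper's own integration-by-parts proof, merely repackaged: the paper integrates by parts once to obtain $I=-I+0$, while you first compress $w\,\nabla w=\tfrac12\nabla(w^2)$ and then apply the divergence theorem, which is the same computation. One small inaccuracy in your regularity aside: for $w\in H^1(\Om)$ in dimension $2$ or $3$ one gets $\nabla(w^2)=2w\nabla w\in L^{3/2}(\Om)$ rather than $\L^2(\Om)$, but this is still enough for the divergence theorem (and your density argument handles it in any case).
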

\begin{proof}
By integration by parts:
\begin{equation*}
 \intO \bds\phi\cdot \nabla w \,w = -\intO \underbrace{\nabla \cdot \bds\phi}_{=0}\cdot w^2 -\intO \bds\phi\cdot \nabla w \,w + \int\lm_{\pa\Om} \underbrace{\bds\phi\cdot \bn}_{=0}\, w^2 =0.
\end{equation*}
\end{proof}

\nit The classical approach consists in obtaining a priori estimates for $\bv$. %Now, let us obtain a priori estimates for $\bv$.
\begin{prop}
Let $(\bv,q,\bt)$ be a solution of \eqref{EQ_rest}.
Then $\bv=(v_1$, $v_2)$ satisfy the following inequality for $\ve$ small enough:
\begin{equation}\label{apriori-r1r2}
  r\nu \rho \dfrac{\di}{\di t} \lt(|v_1|^2 +|\ve v_2|^2\rt) + \frac32 r(1-r)\nu^2 \lt(|\nabla_\ve v_1|^2 + |\ve\nabla_\ve v_2|^2\rt) 
\leq-\mathcal D_1 -\mathcal D_2 +C, 
\end{equation}
where $\mathcal D_1 = \dfrac{2r\nu}\ve \ds\intO \a\,\pa_x v_1 +\dfrac{2r\nu}{\ve^2} \intO \b\,\pa_z v_1$, $\mathcal D_2 =2r\nu\ds \intO \b\,\pa_x v_2 +\dfrac{2r\nu}{\ve} \intO \c\,\pa_z v_2$ and $C$ is a constant independent of $\ve$.
\end{prop}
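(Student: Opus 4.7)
The plan is to derive the inequality via a standard $L^2$-energy estimate on the velocity remainders. I would test the first equation of \eqref{EQ_rest} against $v_1$ and the second against $\ve^2 v_2$, integrate over $\Om$, multiply each identity by $2r\nu$, and sum. The weighting $(v_1,\ve^2 v_2)$ is forced by the desired structure: only with this scaling do the time and viscous contributions assemble into $|v_1|^2+|\ve v_2|^2$ and $|\nabla_\ve v_1|^2+|\ve\nabla_\ve v_2|^2$, and the two pressure terms recombine into $\nabla q\cdot\bv$.

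On the left-hand side, the usual cancellations go through: the transport parts $\bv\cdot\nabla v_i$ coming from $\di_t$ vanish by Lemma~\ref{lm:convct}, since $\nabla\cdot\bv=0$ and $\bv|_{\pa\Om}=0$; integration by parts on the viscous terms yields $2r(1-r)\nu^2(|\nabla_\ve v_1|^2+|\ve\nabla_\ve v_2|^2)$ with no boundary contribution; the two pressure terms combine into $\tfrac{2r\nu}{\ve^2}\intO\nabla q\cdot\bv=-\tfrac{2r\nu}{\ve^2}\intO q\,\nabla\cdot\bv=0$; and integration by parts on the $\bt$-divergence produces exactly $\mathcal D_1+\mathcal D_2$, which I then move to the right-hand side with opposite sign.

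It remains to bound the forcing $2r\nu\intO L_1 v_1+\tfrac{2r\nu}{\ve}\intO C_1 v_1+2r\nu\intO L_2 v_2+\tfrac{2r\nu}{\ve}\intO C_2 v_2$ by $\tfrac{1}{2} r(1-r)\nu^2(|\nabla_\ve v_1|^2+|\ve\nabla_\ve v_2|^2)+C$. The $L_i v_i$ contributions are routine: the transport piece $\bu^*\cdot\nabla v_1\cdot v_1$ vanishes again by Lemma~\ref{lm:convct} (using $\nabla\cdot\bu^*=0$ from \eqref{EQ_star} and $v_1|_{\pa\Om}=0$), while each remaining summand is a product of $v_i$ (or a bilinear expression in $\bv$) with a function bounded uniformly in $\ve$ thanks to the $\C^m$-regularity of $\bu^*,\bs^*$ from Theorem~\ref{th:reg}. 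Combined with the anisotropic Poincaré inequality $|v_i|_{L^2}\leq C\ve|\nabla_\ve v_i|_{L^2}$ (valid because $v_i|_{z=0}=v_i|_{z=h}=0$ and $\tfrac{1}{\ve^2}|\pa_z v_i|^2\leq|\nabla_\ve v_i|^2$) and Young's inequality, these terms are absorbed into the dissipation up to an $\ve$-independent constant; the bilinear-in-$\bv$ pieces require $\ve$ small enough to fit inside the viscous term, which is precisely the hypothesis in the statement.

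The main obstacle is the apparent $1/\ve$ singularity of the $C_1$ and $C_2$ contributions, where a direct Cauchy-Schwarz estimate yields $O(1/\ve)$ and cannot be absorbed. The resolution exploits three structural facts simultaneously: (a) the algebraic identity $\sigma_{11}^*+\sigma_{22}^*=0$ from the fourth and last equations of \eqref{EQ_star}; (b) incompressibility $\pa_x v_1+\pa_z v_2=0$; and (c) the anisotropic Poincaré inequality above. For $C_1$, Cauchy-Schwarz followed by Poincaré turns $\tfrac{2r\nu}{\ve}\intO\pa_x\sigma_{11}^*\,v_1$ into a bound of the form $C|\nabla_\ve v_1|_{L^2}$ that Young absorbs. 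For $C_2$, the Poincaré estimate on $v_2$ alone is too weak; instead I would integrate by parts first in $z$ (with vanishing boundary contribution since $v_2|_{z=0,h}=0$), rewrite $\pa_z v_2=-\pa_x v_1$ by incompressibility, and integrate by parts once more in $x$ to arrive at $-\tfrac{2r\nu}{\ve}\intO\pa_x\sigma_{22}^*\,v_1$, which is then handled by the same Poincaré-plus-Young combination. The resulting constants are controlled by the $L^\infty$-norms of $\pa_x\bs^*$ from Theorem~\ref{th:reg}, independently of $\ve$. Tuning the Young parameters so that the absorption consumes at most $\tfrac{1}{2} r(1-r)\nu^2$ of each viscous norm yields the claimed inequality, with coefficient $\tfrac{3}{2} r(1-r)\nu^2$ remaining on the left.
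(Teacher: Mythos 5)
Your proposal follows essentially the same strategy as the paper: test the first momentum equation against $v_1$, the second against $\ve^2 v_2$, multiply by $2r\nu$, sum, kill the convective and pressure terms, extract $\mathcal D_1 + \mathcal D_2$ by integration by parts on the stress-divergence, and absorb the forcing terms $L_i$, $C_i$ into the dissipation via Poincar\'e and Young, with the crucial integration-by-parts-plus-incompressibility trick for $C_2$ to convert $\tfrac1\ve\intO \pa_z\sigma_{22}^*\,v_2$ into $-\tfrac1\ve\intO\pa_x\sigma_{22}^*\,v_1$. That matches the paper line by line.

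One point where you diverge and should be more careful: you claim $\intO\bu^*\cdot\nabla v_1\,v_1$ vanishes \emph{by Lemma~\ref{lm:convct}}, citing $\nabla\cdot\bu^*=0$ and $v_1|_{\pa\Om}=0$. Lemma~\ref{lm:convct} as stated requires $\bds\phi\cdot\bn|_{\pa\Om}=0$, and the paper explicitly refrains from applying it here precisely because $\bu^*\cdot\bn\neq 0$ on $\Gamma_L$ (a prescribed flux is imposed). Your \emph{conclusion} is nevertheless correct — the integration-by-parts identity $\intO\bds\phi\cdot\nabla w\,w = -\tfrac12\intO(\nabla\cdot\bds\phi)w^2 + \tfrac12\int_{\pa\Om}(\bds\phi\cdot\bn)w^2$ kills the boundary term equally well when $w\in H^1_0(\Om)$ — but you should state you are using a variant of the lemma with $w|_{\pa\Om}=0$ in place of $\bds\phi\cdot\bn|_{\pa\Om}=0$, rather than invoking the lemma as written. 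The paper instead estimates this term via Poincar\'e/Young (producing the constants $M_3,M_4$), which is wasteful but harmless; your version is tighter and avoids a smallness condition on $\ve$ for that particular term. A second, minor point: the algebraic identity $\sigma_{11}^*+\sigma_{22}^*=0$ that you list as ingredient (a) is not actually used anywhere in your treatment of $C_1$ or $C_2$ — it can simply be dropped.
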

\begin{proof}
The proof consists in obtaining classical a priori estimates on both $v_1$ and $v_2$.
\begin{asparaenum}[\bf Step 1.]
 \item Let us multiply \eqref{EQ_rest:a} by $ v_1$ and integrate over $\Om$. Observe that $v_1$ is regular enough to do so. Since $\bv|_{\pa \Om} = 0$, the boundary terms in the integration by parts are all zero. For example $-\intO \Delta_\ve v_1\, v_1 = \intO |\nabla_\ve v_1| ^2$. Moreover, the convection terms $\intO \bv\cdot \nabla v_1 \,v_1$ contained in $\intO \di_t v_1\,v_1$ are equal to zero by Lemma \ref{lm:convct}, since $\nabla \cdot \bv=0$ and $\bv|_{\pa \Om}=0$. It follows:
\begin{equation}\label{apriori-r1}
 \frac\rho2 \dfrac{\di}{\di t} |v_1|^2+ (1-r)\nu|\nabla_\ve v_1|^2 -\dfrac1{\ve^2} \intO q\, \pa_x v_1= \underbrace{-\dfrac1\ve \intO \a\,\pa_x v_1 -\dfrac1{\ve^2} \intO \b\,\pa_z v_1}_{-\mathcal D_1/2r\nu} +\intO L_1\,v_1 + \dfrac1\ve\intO C_1\,v_1.
\end{equation}

It remains to estimate the terms $\intO L_1\,v_1$ and $\intO C_1\,v_1$. 

\paragraph{Main idea} 
{Estimates of the form: $ \intO L_1\,v_1 +\frac1\ve\intO C_1\,v_1 \leq C+ \kappa_1 |\nabla_\ve v_1|^2 + \kappa_2|\pa_z v_2|^2$ will be proved, where $C$ is a constant independent of $\ve$ and where the constants $\kappa_1, \kappa_2$ satisfy $\kappa_1,\kappa_2 < (1-r)\nu/4$. These constants will be precised later in the proof.} 

\nit In the following, $C$, $c_i$ and $M_i$ will denote some constants independent of $\ve$ and $\eta$, which might depend on $|\Om|$, on the physical parameters of the problem and on $\bu^*$, $\bs^*$ in sufficiently regular norms. 
\begin{itemize}
 \item Let us estimate first the linear (with respect to $\bv$) term $\mathcal L_1$ of $L_1$. To this end, Poincaré inequality is useful: for $f\in L^2(\Om)$, with $f|_{z=h}=0$, $|f|\leq C_P |\pa_z f|$. The constant $C_P$ only depends on $\Om$. 
\begin{asparaitem}[$\star$]
 \item $\rho\ds\intO v_1\,\pa_x u_1^*\,v_1 \leq \rho|\pa_x u_1^*|_\infty\, |v_1|^2 \leq \rho \,\ve^2 C_P^2|\pa_x u_1^*|_\infty\, \lt|\dfrac1\ve\pa_z v_1\rt|^2=:M_1 \ve^2 \lt|\dfrac1\ve\pa_z v_1\rt|^2$.

\nit Note that by Theorem \ref{th:reg}, $\pa_x u_1^* \in L^\infty(\Om)$. In the following, all the regularity results used in the estimates also follow from Theorem \ref{th:reg}.

\item For the next term, Poincaré inequality is combined with Young inequality:
\begin{equation}
\begin{split}
 \rho\ds\intO v_2\, \pa_z u_1^*\,v_1 \leq \rho|\pa_z u_1^*|_\infty\,|v_2|\, |v_1|& \leq \rho \,C_P^2|\pa_z u_1^*|_\infty\,|\pa_z v_2|\, |\pa_z v_1|\\
\vspace{-1cm}
& \leq \underbrace{\rho \,C_P^2|\pa_z u_1^*|_\infty}_{=:M_2} \lt(\dfrac\ve2 |\pa_z v_2|^2 + \dfrac\ve2 \lt|\dfrac1\ve\pa_z v_1\rt|^2\rt).
\end{split}\nonumber
\end{equation}

\item In a similar way:
\begin{equation*} 
\rho\intO \bu^*\cdot\nabla v_1 \,v_1 \leq \underbrace{\rho\, C_P|u_1^*|_\infty}_{=:M_3}\lt(\dfrac\ve2 |\pa_x v_1|^2 + \dfrac\ve2 \lt|\dfrac1\ve\pa_z v_1\rt|^2\rt)+\ve^2 \underbrace{\rho\, C_P|u_2^*|_\infty }_{=:M_4}\,\lt|\dfrac1\ve \pa_z v_1\rt|^2.
\end{equation*}
Observe here that it was not possible to apply Lemma \ref{lm:convct}, since $\bu^*\cdot \bn|_{\pa\Om} \neq 0$.
\end{asparaitem}

\item It remains the easier terms of $L_1$ and $C_1$ (the ones which do not depend on $\bv$).
\begin{asparaitem}[$\star$]
\item The first term is treated using again Poincaré and Young inequalities:
\begin{equation*} 
\rho\intO \bu^*\cdot \nabla u_1^* \,v_1 \leq \rho \,C_P |\bu^*|_\infty\,|\nabla u_1^*|\, |\pa_z v_1| \leq \frac12(\rho\, C_P |\bu^*|_\infty\,|\nabla u_1^*|)^2+ \dfrac{\ve^2}2  \lt|\dfrac1\ve\pa_z v_1\rt|^2 \leq C +\dfrac{\ve^2}2  \lt|\dfrac1\ve\pa_z v_1\rt|^2.
\end{equation*}

\item Similarly, $(1-r)\nu\ds \intO \pa_x^2 u_1^* \, v_1 \leq C +  \dfrac{\ve^2}2  \lt|\dfrac1\ve\pa_z v_1\rt|^2$.

\item The last term is estimated as follows, using Young inequality:
\begin{equation*}
 \dfrac1\ve \intO \pa_x \sigma_{11}^* \, v_1 \leq \frac1{4c}|\pa_x \sigma_{11}^*|^2 + c\lt|\dfrac1\ve\pa_z v_1\rt|^2 \leq C+c\lt|\dfrac1\ve\pa_z v_1\rt|^2,
\end{equation*}
where $c$ is a positive constant independent of $\ve$ that can be chosen arbitrarily.
\end{asparaitem}
Now, let us choose $\ve$ and $c$ small enough such that all constants satisfy: 
\begin{equation} \label{epetit1}
M_1\ve^2, \frac{M_2 \ve}2, \frac{M_3\ve}2, M_4\ve^2, \frac{\ve^2}2, c \leq \frac{(1-r)\nu}{36}.
\end{equation}
\end{itemize}

\item Let us multiply \eqref{EQ_rest:b} by $\ve^2 v_2$ and integrate over $\Om$. Again, the boundary terms in the integrations by parts vanish, since $v_2|_{\pa\Om}=0$, and the convection terms are equal to zero since $\nabla \cdot \bv=0$ and $\bv|_{\pa \Om} =0$ (by Lemma \ref{lm:convct}). It follows:
\begin{equation}\label{apriori-r2}
\frac{\rho\,\ve^2}2 \dfrac{\di}{\di t} |v_2|^2 + (1-r)\nu |\ve\nabla_\ve v_2|^2 -\dfrac1{\ve^2} \intO q\, \pa_z v_2
 = \underbrace{- \intO \b\,\pa_x v_2 -\dfrac1{\ve} \intO \c\,\pa_z v_2}_{-\mathcal D_2/2r\nu} + \intO L_2\,v_2 + \dfrac1\ve\intO C_2\,v_2.
\end{equation}
Each term of $\intO L_2\,v_2$ and $\intO C_2\,v_2$ is estimated with the help of Poincaré and Young inequalities as in the preceding step. 
\begin{asparaitem}[$\star$]
 \item $\ve^2\rho\ds \intO \bv\cdot \nabla u_2^*\, v_2 \leq \ve^2\underbrace{\rho C_P^2|\pa_x u_2^*|_\infty}_{=:M_5}\lt(\dfrac\ve2 \lt|\dfrac1\ve \pa_z v_1\rt|^2+ \dfrac\ve2 |\pa_z v_2|^2 \rt) +\ve^2 \underbrace{\rho C_P^2 |\pa_z u_2^*|_\infty}_{=:M_6}\,|\pa_z v_2|^2 $.

\item $\ve^2\rho\ds\intO \bu^*\cdot \nabla v_2 \,v_2 \leq \ve\underbrace{\rho C_P|u_1^*|_\infty}_{=:M_7} \lt(|\ve \pa_x v_2|^2 + |\pa_z v_2|^2\rt) + \ve^2 \underbrace{\rho C_P|u_2^*|_\infty}_{=:M_8} |\pa_z v_2|^2$.

\item $\ve^2\rho\ds\intO \bu^*\cdot\nabla u_2^* \,v_2  \leq \frac12\ve^2 \rho |\bu^*|_\infty^2 |\nabla u_2^*|^2+ \ve^2 \underbrace{\frac12 C_P^2}_{=:M_9}|\pa_z v_2|^2 \leq C+ \ve^2 M_9 |\pa_z v_2|^2$.

\item By integration by parts (all boundary terms are equal to zero since $v_2|_{\pa\Om} =0$) and Young inequality as before:
 $$(1-r)\nu\ve^2\ds\intO \pa_x^2 u_2^* \,v_2 = -(1-r)\nu\ve^2\intO \pa_x u_2^* \,\pa_x v_2 \leq \ve\underbrace{(1-r)\nu}_{=:M_{10}}\lt(\frac12|\pa_x u_2^*|^2 + \frac12|\ve \pa_x v_2|^2\rt).$$

\item $(1-r)\nu\ds\intO \pa_z^2 u_2^* \, r_2 \leq \frac1{4c_1}(1-r)^2\nu^2 C_P^2|\pa_z^2 u_2^*|^2 + c_1|\pa_z v_2|^2 \leq C+ c_1|\pa_z v_2|^2$, where $c_1$ is a arbitrary positive constant.
\item $\ds\intO \pa_x \sigma_{12}^* \, v_2 \leq \frac{C_P^2}{4c_1} |\pa_x\sigma_{12}^*|^2 + c_1|\pa_z v_2|^2 \leq C+ c_1|\pa_z v_2|^2 $.

\item The $C_2$ term is treated with integration by parts (again, no boundary terms since $v_2|_{\pa\Om}=v_1|_{\pa\Om} =0$) and the divergence equation. The term is then treated as the preceding one:
\begin{equation} 
\begin{split}
\dfrac1\ve \intO \pa_z \sigma_{22}^* \, v_2 = -\dfrac1\ve \intO \sigma_{22}^* \, &\pa_z v_2 = \dfrac1\ve \intO\sigma_{22}^*\, \pa_x v_1 = -\dfrac1\ve \intO \pa_x \sigma_{22}^* v_1 \\
&\leq C_P|\pa_x \sigma_{22}^*|\,\lt|\dfrac1\ve \pa_z v_1\rt| \leq \frac{C_P^2}{4c_2}|\pa_x \sigma_{22}^*|^2+c_2\lt|\dfrac1\ve \pa_z v_1\rt|^2 \leq C+c_2\lt|\dfrac1\ve \pa_z v_1\rt|^2 .
\end{split}\nonumber
\end{equation}
\end{asparaitem}
Now, let us choose $\ve$, $c_1$ and $c_2$ small enough such that 
\begin{equation}\label{epetit2}
\frac{M_5 \ve^3}2, M_6\ve^2, M_7\ve, M_8\ve,M_9\ve, \frac{M_{10}\ve}2, c_1,c_2 \leq \frac{(1-r)\nu}{36}.
\end{equation}

\item After summing \eqref{apriori-r1} and \eqref{apriori-r2}, and multiplying by $2r\nu$, it holds for $\ve$ small enough (satisfying \eqref{epetit1} and \eqref{epetit2}):
\begin{equation*}
  r\nu \rho \dfrac{\di}{\di t} \lt(|v_1|^2 +|\ve v_2|^2\rt) + \frac32 r(1-r)\nu^2 \lt(|\nabla_\ve v_1|^2 + |\ve\nabla_\ve v_2|^2\rt) -\dfrac{2r\nu}{\ve^2} \intO q\,\lt( \pa_x v_1 + \pa_z v_2\rt)\leq -\mathcal D_1 -\mathcal D_2 +C,
\end{equation*}
where $C$ is a constant independent of $\ve$.
From the divergence equation $\nabla \cdot \bv =  \pa_x v_1 + \pa_z v_2=0$ it follows that the pressure term $\intO q\,\lt( \pa_x v_1 + \pa_z v_2\rt)=0$, and equation \eqref{apriori-r1r2} is obtained.
\end{asparaenum}
\end{proof}

\begin{prop}
 Let us suppose that
\begin{equation*}
\lambda^* |\pa_z u_1^*|_\infty\leq 1/12,~ \lambda^* |\sigma_{12}^*|_\infty\leq \chi, ~\lambda^*(|\sigma_{11}^*|_\infty +|\sigma_{22}^*|_\infty)\leq \chi, ~ 2\lambda^*|\pa_z \sigma_{12}^*|_\infty \leq \chi, ~\lambda^* |\pa_z \sigma_{11}^*|_\infty \leq \chi, \end{equation*}
where $\chi =\dfrac\nu6\sqrt{r(1-r)}$.
Then for $\ve$ small enough, $\a$, $\b$, $\c$ solution of \eqref{EQ_rest} satisfy the following inequality:
\begin{equation}\label{apriori-abc}
\begin{split}
  \dfrac{\lambda^*}{2\ve}&\dfrac{\di}{\di t} \lt(|\tau_{11}|^2 + 2|\tau_{12}|^2 +|\tau_{22}|^2\rt) + \frac12\lt(\lt|\dfrac1\ve \a\rt|^2+ 2\lt|\dfrac1\ve \b\rt|^2+ \lt|\dfrac1\ve \c\rt|^2\rt)\\
&+ \frac\eta\ve \lt(|\nabla_\ve \tau_{11}|^2 + 2 |\nabla_\ve \tau_{12}|^2 + |\nabla_\ve \tau_{22}|^2\rt)\leq \mathcal D_1+ \mathcal D_2 + r(1-r)\nu^2\lt(|\nabla_\ve v_1|^2+|\ve \nabla_\ve v_2|^2\rt)+C,
\end{split}
\end{equation}
where $C$ is a constant independent of $\ve$.
\end{prop}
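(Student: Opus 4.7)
The strategy mirrors the velocity estimate: multiply each Oldroyd remainder equation by the appropriate stress component divided by $\ve$, integrate over $\Om$, and sum. Concretely, I would test \eqref{EQ_rest:d} against $\tau_{11}/\ve$, \eqref{EQ_rest:e} against $2\tau_{12}/\ve$, and \eqref{EQ_rest:f} against $\tau_{22}/\ve$. The time-derivative parts assemble into $\frac{\lambda^*}{2\ve}\frac{\di}{\di t}(|\tau_{11}|^2+2|\tau_{12}|^2+|\tau_{22}|^2)$, while the transport parts $\bv\cdot\nabla\bt$ vanish by Lemma~\ref{lm:convct} (since $\nabla\cdot\bv=0$ and $\bv|_{\pa\Om}=0$), and the quadratic terms $N(\bv,\cdot)$ cancel algebraically: $-N(\bv,\tau_{12})\tau_{11}+N(\bv,\tau_{11}-\tau_{22})\tau_{12}+N(\bv,\tau_{12})\tau_{22}=0$. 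The damping $\frac{1}{\ve}\tau_{ij}$ tested against $\tau_{ij}/\ve$ produces the $|\tau_{ij}/\ve|^2$ contributions on the left, and integration by parts of $-\eta\Delta_\ve \tau_{ij}\cdot\tau_{ij}/\ve$ yields $\frac{\eta}{\ve}|\nabla_\ve\tau_{ij}|^2$ with no boundary residue because $\bt|_{\pa\Om}=0$.

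Next I would handle the velocity--stress coupling. The terms $-2r\nu\pa_x v_1$, $-r\nu(\pa_x v_2+\frac{1}{\ve}\pa_z v_1)$, $-\frac{2r\nu}{\ve}\pa_z v_2$ tested respectively against $\tau_{11}/\ve$, $2\tau_{12}/\ve$, $\tau_{22}/\ve$ produce exactly $\mathcal D_1$ together with the two terms $\frac{2r\nu}{\ve}\intO\b\,\pa_x v_2+\frac{2r\nu}{\ve^2}\intO\c\,\pa_z v_2$. The first of these is $\mathcal D_2$ plus a remainder $2r\nu(\frac{1-\ve}{\ve})\intO\b\,\pa_x v_2$ and similarly for the second; both remainders are bounded by Young's inequality in the form $\delta(|\b/\ve|^2+|\c/\ve|^2)+C_\delta(|\ve\pa_x v_2|^2+|\pa_z v_2|^2)$, with $\delta$ chosen small enough to be absorbed into the $\frac12|\tau_{ij}/\ve|^2$ on the left and the velocity contributions producing the stated $r(1-r)\nu^2(|\nabla_\ve v_1|^2+|\ve\nabla_\ve v_2|^2)$ on the right.

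The delicate part is controlling the source terms. The contributions $\intO \frac{\tau_{ij}}{\ve}L_{ij}$ and $\intO\frac{\tau_{ij}}{\ve}\cdot\eta\Delta_\ve\sigma_{ij}^*$ are bounded by Cauchy--Schwarz, Young and Poincaré using the $\C^1$/$\C^2$ regularity of $\bu^*$, $\bs^*$ provided by Theorem~\ref{th:reg}; they yield a constant $C$ independent of $\ve$ plus small multiples of $|\tau_{ij}/\ve|^2$ and of $|\nabla_\ve v_i|^2$. The truly dangerous contributions are the $\frac{1}{\ve}L_{ij}'$ terms: tested against $\tau_{ij}/\ve$ they generate prefactors $\frac{1}{\ve^2}$, so they must be absorbed into the left-hand side $|\tau_{ij}/\ve|^2$ rather than controlled by a constant. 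For example $\frac{\lambda^*}{\ve^2}\intO\pa_z u_1^*\,\tau_{12}\,\tau_{11}\le \lambda^*|\pa_z u_1^*|_\infty\,|\tau_{11}/\ve|\,|\tau_{12}/\ve|$, and similarly for the cross terms involving $\sigma_{ij}^*$ and $\pa_z\sigma_{ij}^*$; the hypotheses $\lambda^*|\pa_z u_1^*|_\infty\le 1/12$ and $\lambda^*|\sigma_{ij}^*|_\infty,\lambda^*|\pa_z\sigma_{ij}^*|_\infty\le \chi=\frac{\nu}{6}\sqrt{r(1-r)}$ are exactly calibrated so that, after summing and applying Young's inequality with a weight matched to the coefficient $\frac32 r(1-r)\nu^2$ of the $|\nabla_\ve v|^2$ terms, the total prefactor in front of $|\tau_{ij}/\ve|^2$ stays strictly below $\frac12$.

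The main obstacle is precisely this last bookkeeping: the $\frac{1}{\ve}L_{ij}'$ terms cannot afford to produce $1/\ve$ losses, so every cross product has to be balanced either against the $\frac12|\tau_{ij}/\ve|^2$ already present or against the $\frac{3}{2}r(1-r)\nu^2(|\nabla_\ve v_1|^2+|\ve\nabla_\ve v_2|^2)$ imported from \eqref{apriori-r1r2}. Keeping the numerical constants below the required thresholds forces the explicit smallness assumptions on $\lambda^*|\pa_z u_1^*|_\infty$, $\lambda^*|\sigma_{ij}^*|_\infty$ and $\lambda^*|\pa_z\sigma_{ij}^*|_\infty$. Once these absorptions are carried out and $\ve$ is further restricted so that the $\ve$-weighted Young constants (analogues of \eqref{epetit1}--\eqref{epetit2}) satisfy the needed smallness, summing the six tested equations yields exactly \eqref{apriori-abc}.
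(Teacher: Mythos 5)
Your strategy matches the paper's: test the three Oldroyd remainder equations against $\tau_{11}/\ve$, $2\tau_{12}/\ve$, $\tau_{22}/\ve$, use Lemma~\ref{lm:convct} for the $\bv\cdot\nabla\bt$ transport, note the algebraic cancellation of the $N(\bv,\cdot)$ terms, integrate the $\eta$-Laplacian by parts, and absorb the $\tfrac1\ve L_{ij}'$ cross terms into $\tfrac12|\tau_{ij}/\ve|^2$ and the $|\nabla_\ve v|^2$ budget using the explicit smallness hypotheses. That part is faithful to the paper.

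However, there is one genuine gap. You dispose of $\intO\frac{\tau_{ij}}{\ve}L_{ij}$ by ``Cauchy--Schwarz, Young and Poincar\'e using the $\C^1/\C^2$ regularity of $\bu^*,\bs^*$.'' This does not work for the piece $-\lambda^*\bu^*\cdot\nabla\tau_{ij}$ sitting inside $\mathcal L_{ij}$. Tested against $\tau_{ij}/\ve$, that contribution is $-\frac{\lambda^*}{\ve}\intO(\bu^*\cdot\nabla\tau_{ij})\,\tau_{ij}$, and a Cauchy--Schwarz estimate would leave you with $\frac{\lambda^*}{\ve}|\bu^*|_\infty|\nabla\tau_{ij}|\,|\tau_{ij}|$: there is nothing on the left-hand side with which to absorb $|\nabla\tau_{ij}|$, since the only available gradient control is the $\eta$-weighted term $\frac{\eta}{\ve}|\nabla_\ve\tau_{ij}|^2$, which vanishes when $\eta\to0$. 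Lemma~\ref{lm:convct} also does not apply because $\bu^*\cdot n\neq 0$ on $\Gamma_L$. The paper's argument is different and essential: since $\nabla\cdot\bu^*=0$, integration by parts gives
\begin{equation*}
-\frac{\lambda^*}{\ve}\intO(\bu^*\cdot\nabla\tau_{ij})\,\tau_{ij}
= -\frac{\lambda^*}{2\ve}\int_{\pa\Om}(\bu^*\cdot n)\,\tau_{ij}^2 ,
\end{equation*}
and this boundary term is then killed by the sign structure of the data: $\bu^*\cdot n=0$ on the top and bottom, $\bt|_{\Gamma_+}=0$ by the prepared boundary condition~\eqref{deftheta}, and $\bu^*\cdot n\ge 0$ on $\Gamma_-$ so that the remaining piece is $\le 0$. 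Without this integration-by-parts and sign argument the estimate does not close, so your proof as written is incomplete precisely at the transport-by-$\bu^*$ term. A secondary caution: your proposed Young absorption of the velocity--stress coupling mismatch produces a factor $\frac1{4\delta}|\pa_x v_2|^2$, and since the left-hand side of \eqref{apriori-r1r2} controls only $|\ve\pa_x v_2|^2$ you would pick up a prefactor $\ve^{-2}$; this bookkeeping should be re-examined rather than dispatched by a one-line Young step.
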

\begin{proof}
As in the preceding proposition, classical a priori estimates on $\a$, $\b$ and $\c$ are obtained, and the remaining terms are estimated accurately.
\begin{asparaenum}[\bf Step 1.]
\item Let us multiply \eqref{EQ_rest:d} by $\dfrac{\tau_{11}}\ve$ and integrate over $\Om$.
Again, the convection terms $\intO \bv\cdot \nabla\a\,\a$ contained in $\intO\di_t \a\,\a$ are equal to zero by Lemma \ref{lm:convct}, since $\nabla \cdot \bv=0$ and $\bv|_{\pa \Om}=0$ (see \eqref{BCIC}). Moreover, there is no boundary term in the integration by parts since the boundary conditions on $\bs$ have be chosen such that $\bt\cdot n|_{\pa\Om}=0$ (see also \eqref{BCIC}). It follows:
\begin{equation}\label{apriori-a}
\begin{split}
 \dfrac{\lambda^*}{2\ve} \dfrac{\di}{\di t} |\tau_{11}|^2 -\dfrac{\lambda^*}\ve \intO N(\bv,\tau_{12})\,\tau_{11}& + \lt|\dfrac1\ve \a\rt|^2
+\frac\eta\ve |\nabla_\ve \tau_{11}|^2\\
&=\dfrac{2r\nu}\ve \intO \pa_x v_1\,\tau_{11}+\dfrac1\ve \intO L_{11}\, \tau_{11} + \dfrac1{\ve^2} \intO L_{11}' \, \tau_{11}.
\end{split}
\end{equation}
\begin{itemize}
\item The terms of $\ds \intO \mathcal L_{11}\, \a$ are estimated as follows:
\begin{asparaitem}[$\star$]
\item $\lambda^* \ds\intO \pa_x u_2^* \,\b\, \tau_{11} \leq \underbrace{\lambda^*|\pa_x u_2^*|_\infty}_{=:M_{11}}\lt(\dfrac{\ve^2}2\lt|\dfrac1\ve \a\rt|^2 + \dfrac{\ve^2}2\lt|\dfrac1\ve \b\rt|^2\rt)$. 

\item In a same way:
$\dfrac{\lambda^*}\ve \ds\intO v_1\,\pa_x \sigma_{11}^*\, \tau_{11} \leq \underbrace{\lambda^*|\pa_x \sigma_{11}^*|_\infty C_P}_{=:M_{12}} \lt(\dfrac{\ve}2\lt|\dfrac1\ve \pa_z v_1\rt|^2 + \dfrac\ve2 \lt|\dfrac1\ve \a\rt|^2\rt)$.

Let us choose $\ve$ small enough such that:
\begin{equation*}
 \frac{M_{11}\ve^2}2 \leq \frac1{24} \qquad \text{and} \qquad  \frac{M_{12}\ve}2 \leq \text{Min}\{\frac{r(1-r)\nu}6, \frac1{24}\}.
\end{equation*}

\item $\lambda^* \ds \intO \pa_x v_2\,\sigma_{12}^* \, \tau_{11} \leq \lambda^* |\sigma_{12}^*|_\infty\, |\ve\pa_x v_2|\,\lt|\dfrac1\ve \a\rt| \leq \lambda^* |\sigma_{12}^*|_\infty \lt(\frac1{4c_3}  |\ve\pa_x v_2|^2 + c_3  \lt|\dfrac1\ve \a\rt|^2\rt)$. 

Here, it is not possible to choose $c_3$ such that both coefficients are less than $r(1-r)\nu/6$ and $1/24$. Therefore,a condition on $\lambda^* |\sigma_{12}^*|_\infty$ is imposed such that:
\begin{equation*}
 \frac{\lambda^* |\sigma_{12}^*|_\infty }{4c_3} \leq \frac{r(1-r)\nu}6 \qquad \text{and} \qquad \lambda^* |\sigma_{12}^*|_\infty c_3 \leq \frac1{24}.
\end{equation*}
Choosing $c_3$ satisfying $\lambda^*|\sigma_{12}^*|_\infty c_3 = 1/24$, the condition on $\lambda^* |\sigma_{12}^*|_\infty$ becomes:
\begin{equation*}
 \lambda^* |\sigma_{12}^*|_\infty \leq \frac\nu6 \sqrt{r(1-r)}=:\chi.
\end{equation*}

\item Similarly the following term can be estimated:
\begin{equation*}
 \dfrac{\lambda^*}\ve \ds \intO v_2\,\pa_z \sigma_{11}^* \, \tau_{11} \leq \lambda^*|\pa_z \sigma_{11}^*|_\infty \,|\pa_z v_2|\,\lt|\dfrac1\ve \a\rt| \leq \lambda^*|\pa_z \sigma_{11}^*|_\infty\lt(\frac1{4c_3} |\pa_z v_2|^2 + c_3 \lt|\dfrac1\ve \a\rt|^2\rt).
\end{equation*}
The same reasoning as before allows to control both terms providing that $ \lambda^*|\pa_z \sigma_{11}^*|_\infty \leq \chi$.

\item In order to treat the term $-\lambda^*\ds \intO \bu^*\cdot\nabla \tau_{11} \,\tau_{11} $, 
it is not possible to apply Lemma \ref{lm:convct}, since $\bu^*\cdot n|_{\pa \Om} \neq 0$. However, integration by parts implies that
\begin{equation*}
-\lambda^*\intO \bu^*\cdot\nabla \tau_{11} \,\tau_{11} = -\frac{\lambda^*}2\int\lm_{\pa \Om} \bu^*\cdot n \, \tau_{11}^2.
\end{equation*}
On $\om$, since $\bu^*=(s,0)$ (see \eqref{BC_star}), it holds $\bu^*\cdot n =0$. Thus it remains to consider the boundary integral on $\Gamma_L$. This boundary integral is split into two integrals on $\Gamma_+$ and $\Gamma_-$. On $\Gamma_-$, it holds $ \bu^*\cdot n >0$, thus $-\frac{\lambda^*}2\int\lm_{\Gamma_-} \bu^*\cdot n \, \tau_{11}^2 \leq 0$, and this term is trivially bounded by zero.
On $\Gamma_+$, the boundary conditions are chosen in subsection \ref{ssec:BC} such that $\bt|_{\Gamma_+} =0$, therefore $-\frac{\lambda^*}2\int\lm_{\Gamma_+} \bu^*\cdot n \, \tau_{11}^2 =0$.
\end{asparaitem}

\item All other terms of $\ds \intO L_{11}\, \a$ are easier to manage, since they are linear in $\a$, and they are treated with Young and Poincaré inequalities in a same way as the ones in $v_1$, $v_2$.

\item For the terms of $\ds \intO L_{11}'\, \a$, we proceed as before:
\begin{equation*}
 \dfrac{\lambda^*}{\ve^2}\ds \intO \pa_z u_1^*\, \b\, \tau_{11} \leq \lambda^* |\pa_z u_1^*|_\infty\, \lt|\dfrac1\ve \b\rt|\, \lt|\dfrac1\ve \a\rt| \leq \lambda^* |\pa_z u_1^*|_\infty\lt(\frac12 \lt|\dfrac1\ve \b\rt|^2 + \frac12\lt|\dfrac1\ve \a\rt|^2\rt).
\end{equation*}
Choosing $\lambda^* |\pa_z u_1^*|_\infty \leq 1/12$, both terms are bounded by $1/24$.

\begin{equation*}
 \dfrac{\lambda^*}{\ve^2}\ds \intO \pa_z v_1\, \sigma_{12}^*\, \tau_{11} \leq \lambda^* |\sigma_{12}^*|_\infty\, \lt|\dfrac1\ve \pa_z v_1\rt|\, \lt|\dfrac1\ve \a\rt| \leq \lambda^* |\sigma_{12}^*|_\infty\lt(\frac1{4c_3} \lt|\dfrac1\ve \pa_z v_1\rt|^2+ c_3\lt|\dfrac1\ve \a\rt|^2\rt) .
\end{equation*}
Imposing $\lambda^* |\sigma_{12}^*|_\infty \leq \chi$ is enough to ensure that the coefficients are less than $r(1-r)\nu/6$ and $1/24$.
\end{itemize}

\item Now, multiplying equation \eqref{EQ_rest:e} by $\dfrac{2\b}\ve$ and integrating over $\Om$, with the same reasoning as in the preceding step it follows:
\begin{equation}\label{apriori-b}
\begin{split}
  \dfrac{\lambda^*}{\ve} \dfrac{\di}{\di t} |\tau_{12}|^2 +\dfrac{\lambda^*}\ve &\intO N(\bv,\tau_{11}-\tau_{22})\,\tau_{12} + 2\lt|\dfrac1\ve \b\rt|^2
+\frac{2\eta}\ve|\nabla_\ve \tau_{12}|^2\\
&=\dfrac{2r\nu}\ve \intO \lt(\pa_x v_2+\dfrac1\ve \pa_z v_1\rt)\,\tau_{12}+\dfrac2\ve \intO L_{12}\, \tau_{12}+ \dfrac2{\ve^2} \intO L_{12}' \, \tau_{12}
\end{split}
\end{equation}
The terms in $L_{12}$ and $L_{12}'$ are of the same type as the ones in $L_{11}$ and $L_{11}'$, and are treated very similarly to them, applying Young inequality, and assuming smallness assumptions on $\ve$.
Thus, let us only write the terms needing additional assumptions.
\begin{asparaitem}[$\star$]
 \item $\lambda^*\ds\intO \pa_x v_2\,(\sigma_{11}^*-\sigma_{22}^*)\,\tau_{12} \leq \lambda^*(|\sigma_{11}^*|_\infty+|\sigma_{22}^*|_\infty)\,|\ve \pa_x v_2|\,\lt|\dfrac1\ve \b\rt|$,
and it is enough to assume that $\lambda^*(|\sigma_{11}^*|_\infty+|\sigma_{22}^*|_\infty)\leq\chi$.

\item $\dfrac{2\lambda^*}\ve\ds\intO  v_2\,\pa_z \sigma_{12}^*\,\tau_{12} \leq 2\lambda^*|\pa_z \sigma_{12}^*|_\infty\,|\pa_z v_2|\,\lt|\dfrac1\ve \b\rt|$, and we assume that $2\lambda^*|\pa_z \sigma_{12}^*|_\infty\leq \chi$.

\item $\dfrac{\lambda^*}{\ve^2}\ds\intO \pa_z u^*\, (\tau_{11}-\tau_{22})\,\tau_{12} \leq \lambda^*|\pa_z u_1^*|_\infty \lt(\lt|\dfrac1\ve \a\rt| + \lt|\dfrac1\ve \c\rt| \rt)\, \lt|\dfrac1\ve \b\rt|$, it has already been assumed that $\lambda^*|\pa_z u_1^*|_\infty\leq1/12$.

\item $\dfrac{\lambda^*}{\ve^2}\ds\intO \pa_z v_1\, (\sigma_{11}^*-\sigma_{22}^*)\,\tau_{12} \leq \lambda^*(|\sigma_{11}^*|_\infty +|\sigma_{22}^*|_\infty ) \lt|\dfrac1\ve \pa_z v_1\rt|\,\lt|\dfrac1\ve \b\rt|$, it has already been assumed that $\lambda^*(|\sigma_{11}^*|_\infty+|\sigma_{22}^*|_\infty )\leq \chi$.
\end{asparaitem}

\item Multiplying \eqref{EQ_rest:f} by $\dfrac{\tau_{22}}\ve$, and estimating the terms just as the ones in $\a$, it follows
\begin{equation}\label{apriori-c}
\begin{split} 
 \dfrac{\lambda^*}{2\ve} \dfrac{\di}{\di t} |\tau_{22}|^2 +\dfrac{\lambda^*}\ve \intO N(\bv,\tau_{12})\,\tau_{22}& + \lt|\dfrac1\ve \c\rt|^2+\frac\eta\ve |\nabla_\ve \tau_{22}|^2\\
&=\dfrac{2r\nu}{\ve^2} \intO \pa_z v_2\,c+\dfrac1\ve \intO L_{22}\, \tau_{22} + \dfrac1{\ve^2} \intO L_{22}' \, \tau_{22}.
\end{split}
\end{equation}
Assuming that $\lambda|\sigma_{12}^*|_\infty\leq\chi$, $\lambda^*|\pa_z \sigma_{11}^*|_\infty\leq\chi$ and $\lambda^*|\pa_z u_1^*|_\infty\leq 1/12$, all the terms $\dfrac1\ve \intO L_{22}\, \c$ and $\dfrac1{\ve^2} \intO L_{22}' \, \c$ are bounded and estimated as in Step 1.

\item Summing \eqref{apriori-a}, \eqref{apriori-b} and \eqref{apriori-c}, and noticing that
\begin{equation}
\begin{split}
 -\intO N(\bv,\tau_{12})\,\tau_{11}+&\intO N(\bv, \tau_{11}-\tau_{22})\,\tau_{12}+ \intO N(\bv,\tau_{12})\,\tau_{22} \\
&= \intO \lt(\ve\pa_x v_2-\dfrac1\ve \pa_z v_1\rt)\, \lt(-\b\,\tau_{11} +(\tau_{11}-\tau_{22})\,\tau_{12} + \b\,\c\rt)= 0,
\end{split}\nonumber
\end{equation}
it follows that for $\ve$ small enough
\begin{equation}
\begin{split}
  \dfrac{\lambda^*}{2\ve} \dfrac{\di}{\di t} \lt(|\tau_{11}|^2 + 2|\tau_{12}|^2 +|\tau_{22}|^2\rt)& + \frac12\lt(\lt|\dfrac1\ve \a\rt|^2+ 2\lt|\dfrac1\ve \b\rt|^2+ \lt|\dfrac1\ve \tau_{22} \rt|^2\rt)+ \frac\eta\ve \lt(|\nabla_\ve \tau_{11}|^2 + 2 |\nabla_\ve \tau_{12}|^2 + |\nabla_\ve \tau_{22}|^2\rt) \\
&\leq \mathcal D_1 + \mathcal D_2+r(1-r)\nu^2\lt(|\nabla_\ve v_1|^2+|\ve \nabla_\ve v_2|^2\rt)+C,
\end{split}\nonumber
\end{equation}
where we recognized the terms $\mathcal D_1 + \mathcal D_2$, and where $C$ is a constant independent of $\ve$.
\end{asparaenum}
\end{proof}

\nit From now on, let us come back to the notation with the superscripts $^\ee$, denoting the dependence on $\ve$ and $\eta$.
\begin{theo}\label{th:cv}
Suppose that the solution $\bu^*,\bs^*$ of system \eqref{EQ_star}-\eqref{BC_star} satisfies the following smallness assumptions
\begin{equation}\label{hyp_petit}
\lambda^* |\pa_z u_1^*|_\infty\leq1/12, ~ \lambda^* |\sigma_{12}^*|_\infty\leq\chi, ~ \lambda^*(|\sigma_{11}^*|_\infty+|\sigma_{22}^*|_\infty)\leq \chi, ~ 2\lambda^*|\pa_z \sigma_{12}^*|_\infty \leq \chi,~
\lambda^* |\pa_z \sigma_{11}^*|_\infty \leq \chi, 
\end{equation}
where $\chi = \frac\nu6 \sqrt{r(1-r)}$.
Then the following convergences hold up to subsequences when $\eta$ and then $\ve$ tend to zero:
\begin{gather}
 u_1^\ee\rightarrow u_1^*, ~~ \pa_z u_1^\ee \rightarrow \pa_z u_1^* ,~~ 
\pa_x u_1^\ee \rightharpoonup \pa_x u_1^* \qquad\text{in $L^2(0,T,L^2(\Om))$},\label{cvu}\\
u_2^\ee\rightarrow 0, ~~ \pa_z u_2^\ee \rightarrow 0,~ ~ 
 \pa_x u_2^\ee \rightharpoonup 0 \qquad \text{in $L^2(0,T,L^2(\Om))$},\label{cvv}\\
\ve \bs^\ee \rightarrow \bs^* \qquad\text{in $L^2(0,T,L^2(\Om))$},
\label{cvs}\\
u_1^\ee\rightharpoonup^* u_1^*  , ~~ u_2^\ee \rightharpoonup^* 0  ,~~ 
\ve\bs^\ee \rightarrow \bs^*  \qquad\text{in $L^\infty(0,T,L^2(\Om))$}. \label{cvtps}
\end{gather}
\end{theo}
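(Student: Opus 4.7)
The plan is to derive all the convergences from the two energy estimates \eqref{apriori-r1r2} and \eqref{apriori-abc}, which I would first combine into a single closed differential inequality. Adding them, the indefinite cross-terms $\pm(\mathcal D_1+\mathcal D_2)$ cancel exactly. The gradient contribution $r(1-r)\nu^2(|\nabla_\ve v_1^\ee|^2+|\ve\nabla_\ve v_2^\ee|^2)$ appearing on the right of \eqref{apriori-abc} is absorbed by the corresponding term $\tfrac32 r(1-r)\nu^2(|\nabla_\ve v_1^\ee|^2+|\ve\nabla_\ve v_2^\ee|^2)$ on the left of \eqref{apriori-r1r2}. Setting
\[
E^\ee(t) := r\nu\rho\lt(|v_1^\ee|^2+|\ve v_2^\ee|^2\rt) + \dfrac{\lambda^*}{2\ve}\lt(|\tau_{11}^\ee|^2+2|\tau_{12}^\ee|^2+|\tau_{22}^\ee|^2\rt),
\]
this yields
\[
\dfrac{\di}{\di t}E^\ee + \dfrac12 r(1-r)\nu^2\lt(|\nabla_\ve v_1^\ee|^2+|\ve\nabla_\ve v_2^\ee|^2\rt) + \dfrac12\lt(\lt|\dfrac{\tau_{11}^\ee}\ve\rt|^2+2\lt|\dfrac{\tau_{12}^\ee}\ve\rt|^2+\lt|\dfrac{\tau_{22}^\ee}\ve\rt|^2\rt) + \dfrac{\eta}\ve\,|\nabla_\ve\bt^\ee|^2 \leq C.
\]

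I would then integrate on $(0,T)$ for arbitrary finite $T$. Using that the well-prepared initial conditions \eqref{BCIC} make $E^\ee(0)$ uniformly bounded in $\ve$ and $\eta$, this produces the uniform a priori bounds: $\bv^\ee\in L^\infty(0,T,\L^2(\Om))$, $\nabla_\ve\bv^\ee\in L^2(0,T,\L^2(\Om))$, and $\bt^\ee/\ve\in L^2(0,T,\L^2(\Om))$. With these $\eta$-uniform estimates in hand, I would first pass to the limit $\eta\to 0$ using the regularized construction from the proof of Theorem \ref{th:LM}, treating the non-linear convective terms via Aubin--Lions compactness on $\bv^\ee$; then I would pass to the limit $\ve\to 0$.

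The $\ve$-weighted norms then translate into the stated convergences in an almost routine way. Since $\pa_z v_1^\ee/\ve$ is bounded in $L^2(0,T,L^2(\Om))$, both $\pa_z v_1^\ee$ and, by Poincar\'e (as $v_1^\ee|_{\pa\Om}=0$), $v_1^\ee$ itself tend to zero strongly in $L^2(L^2)$, giving \eqref{cvu} in its strong parts. Since only $\pa_x v_1^\ee$ is merely bounded in $L^2$, we get a weak limit, which is identified as zero by the strong $L^2$ convergence of $v_1^\ee$. For $u_2^\ee=\ve u_2^*+\ve v_2^\ee$, the bounds on $|\pa_z v_2^\ee|$ and $|\ve\pa_x v_2^\ee|$ combined with Poincar\'e give $\ve v_2^\ee\to 0$ and $\ve\pa_z v_2^\ee\to 0$ strongly, and $\ve\pa_x v_2^\ee\rightharpoonup 0$, which is \eqref{cvv}. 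The $L^2$ bound on $\bt^\ee/\ve$ gives $\bt^\ee\to 0$ strongly with rate $\ve$, hence \eqref{cvs}. Finally, the uniform $L^\infty(0,T,\L^2)$ bound on $E^\ee$ supplies weak-$*$ compactness, and the limits in \eqref{cvtps} are identified by the already established strong $L^2(L^2)$ convergences.

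The hard part is not these convergence arguments but the bookkeeping that makes the combined energy inequality closed. The cancellation of $\mathcal D_1+\mathcal D_2$ between the Navier--Stokes and Oldroyd estimates, together with the absorption of the leftover $r(1-r)\nu^2$-terms, is what forces the smallness assumptions \eqref{hyp_petit} on the limit solution $(\bu^*,\bs^*)$; in particular the thresholds $\lambda^*|\pa_z u_1^*|_\infty\le 1/12$ and $\chi=\tfrac\nu6\sqrt{r(1-r)}$ are precisely calibrated so that every cross term produced when multiplying \eqref{EQ_rest:d}--\eqref{EQ_rest:f} by $\bt/\ve$ can be controlled either by $\tfrac12|\bt/\ve|^2$ on the left or by the spare gradient dissipation coming from \eqref{EQ_rest:a}--\eqref{EQ_rest:b}. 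The regularity of $(\bu^*,\bs^*)$ granted by Theorem \ref{th:reg} is what allows all the $L^\infty$ norms of $\bu^*$, $\pa_z u_1^*$, $\bs^*$ and $\pa_z\bs^*$ appearing in \eqref{hyp_petit} to be finite in the first place.
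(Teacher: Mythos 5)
Your proposal follows essentially the same route as the paper: sum the two propositions \eqref{apriori-r1r2} and \eqref{apriori-abc}, cancel the cross-terms $\mathcal D_1+\mathcal D_2$, absorb the leftover $r(1-r)\nu^2$ gradient contribution, integrate in time, and read off the convergences from the $\ve$-weighted norms (Poincar\'e for the strong $L^2$ limits, boundedness for the weak ones, the $\tfrac{\lambda^*}{2\ve}|\bt^\ee|^2$ prefactor for the $O(\sqrt\ve)$ bound giving the strong $L^\infty$ convergence of the stress). The only cosmetic divergence is that you invoke Aubin--Lions for the $\eta\to 0$ step where the paper argues directly on the weak formulation via H\"older; both are standard and lead to the same conclusion.
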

\begin{proof}
Summing \eqref{apriori-r1r2}, \eqref{apriori-abc} implies that for $\ve$ small enough (i.e. if assumption \eqref{hyp_petit} is satisfied):
\begin{equation}%\label{est}
 \begin{split}
  &r\nu\rho \dfrac{\di}{\di t} \lt(|v_1^\ee|^2+|\ve v_2^\ee|^2\rt) + \dfrac{\lambda^*}{2\ve} \dfrac{\di}{\di t} \lt(|\tau_{11}^\ee|^2+2|\tau_{12}^\ee|^2+|\tau_{22}^\ee|^2\rt) + \frac\eta\ve \lt(|\nabla_\ve \tau_{11}^\ee|^2 + 2 |\nabla_\ve \tau_{12}^\ee|^2 + |\nabla_\ve \tau_{22}^\ee|^2\rt)\\
&+ \frac{r(1-r)\nu^2}2\lt(|\pa_x v_1^\ee|^2+\lt|\dfrac1\ve \pa_z v_1^\ee\rt|^2+ |\ve \pa_x v_2^\ee|^2+ |\pa_z v_2^\ee|^2\rt) + \frac12\lt|\dfrac1\ve \a^\ee\rt|^2+ \lt|\dfrac1\ve \b^\ee\rt|^2+\frac12\lt|\dfrac1\ve \c^\ee\rt|^2 \leq C.
\end{split}
\end{equation}
From this inequality, it follows that $\bv^\ee$ converges to $\bv^\ve$ in $\H^1(\Om)$ and $\bt^\ee$ converges $\bt^\ve$ in $\L^2(\Om)$, as $\eta$ tends to zero. $\bv^\ve$ and $\bt^\ve$ are the solutions solutions of \eqref{EQ_rest} without the terms $\eta \Delta \bt^\ee$.
Indeed, recalling the weak formulation of the system \eqref{EQ_rest}, it suffices to notice that Hölder's inequality allows to treat the term $\eta \Delta \bt^\ee$:
\begin{equation*}
 \eta \intO \nabla_\ve \bt^\ee\cdot\nabla_\ve \bds\phi \leq \eta^{1/2}\Big(\underbrace{\eta|\nabla_\ve \bt^\ee|^2}_{\leq C} + |\nabla_\ve \bds\phi|^2\Big) \xrightarrow[\eta\rightarrow 0]{} 0,\quad \forall \bds\phi \in \H^1_0(\Om).
\end{equation*}

\nit Moreover, $\bv^\ve$ and $\bt^\ve$ satisfy the following estimate:
\begin{equation}\label{est}
 \begin{split}
 & r\nu\rho \dfrac{\di}{\di t} \lt(|v_1^\ve|^2+|\ve v_2^\ve|^2\rt) + \dfrac{\lambda^*}{2\ve} \dfrac{\di}{\di t} \lt(|\tau_{11}^\ve|^2+2|\tau_{12}^\ve|^2+|\tau_{22}^\ve|^2\rt) \\
&+ \frac12 r(1-r)\nu^2\lt(|\pa_x v_1^\ve|^2+\lt|\dfrac1\ve \pa_z v_1^\ve\rt|^2+ |\ve \pa_x v_2^\ve|^2+ |\pa_z v_2^\ve|^2\rt) + \frac12\lt|\dfrac1\ve \a^\ve\rt|^2+ \lt|\dfrac1\ve \b^\ve\rt|^2+\frac12\lt|\dfrac1\ve \c^\ve\rt|^2 \leq C.
\end{split}
\end{equation}

\nit It remains to pass to the limit as $\ve $ tends to zero. After integrating \eqref{est} between 0 and $T$, it yields that
\begin{asparaitem}[$\triangleright$]
\item $\|v_1^\ve\|_{L^2(L^2)}\leq \|\pa_z v_1^\ve\|_{L^2(L^2)}\leq C\ve$, thus the following convergences hold in $L^2(0,T,L^2(\Om))$ 
as $\ve$ tends to zero: 
\begin{equation}\label{cvr1a}
v_1^\ve\rightarrow 0 \quad \text{and} \quad  \pa_z v_1^\ve \rightarrow 0.
\end{equation}
From these convergences, it follows that $u_1^\ve=u_1^*+v_1^\ve\rightarrow u_1^*$ in $L^2(0,T,L^2(\Om))$ and $\pa_z u_1^\ve \rightarrow \pa_z u_1^*$ in $L^2(0,T,L^2(\Om))$.

\item $\|\pa_x v_1^\ve\|_{L^2(L^2)}\leq C$, thus $\pa_x v_1^\ve$ converges weakly in $L^2(0,T,L^2(\Om))$. Now, since it is already known that $u_1^\ve\rightarrow u_1^*$, it follows that $\pa_x u_1^\ve \rightharpoonup \pa_x u_1^*$ in $L^2(0,T,L^2(\Om))$.

\item Similarly $\|v_2^\ve\|_{L^2(L^2)}\leq \|\pa_z v_2^\ve\|_{L^2(L^2)}\leq C$, thus $\ve v_2^\ve$ and $\ve\pa_z v_2^\ve$ converge strongly to zero in $L^2(0,T,L^2(\Om))$, and thus $u_2^\ve=\ve u_2^*+\ve v_2^\ve\rightarrow 0$ in $L^2(0,T,L^2(\Om))$, and $\pa_z u_2^\ve \rightarrow 0$ in $L^2(0,T,L^2(\Om))$.

\item $\|\pa_x v_2^\ve\|_{L^2(L^2)}\leq \dfrac{C}\ve$, thus $\pa_x u_2^\ve$ converges weakly in $L^2(0,T,L^2(\Om))$. Since $u_2^\ve\rightarrow 0$, it implies that $\pa_x u_2^\ve \rightharpoonup 0$ in $L^2(0,T,L^2(\Om))$.

\item $\|\a^\ve\|_{L^2(L^2)},\|\b^\ve\|_{L^2(L^2)},\|\c^\ve\|_{L^2(L^2)}\leq C\ve$, therefore $\a^\ve,\tau_{12}^\ve,\c^\ve \rightarrow 0$ in $L^2(0,T,L^2(\Om))$. Thus $\ve\sigma_{11}^\ve = \sigma_{11}^*+\tau_{11}^\ve \rightarrow \sigma_{11}^*$ in $L^2(0,T,L^2(\Om))$, and in the same way $\ve \sigma_{12}^\ve\rightarrow \sigma_{12}^*$ in $L^2(0,T,L^2(\Om))$, $\ve \sigma_{22}^\ve\rightarrow \sigma_{22}^*$ in $L^2(0,T,L^2(\Om))$.

\item From the terms with the derivatives in time, using the fact that $\bv^\ve|_{t=0}= \bu^\ve_0-\bu^* \in \L^2(\Om)$ and $\bt^\ve|_{t=0}=\bs^\ve_0-\bs^*\in \L^2(\Om)$ are bounded independently of $\ve$, we can conclude that 
\begin{equation*}
 \|\bv^\ve\|_{L^\infty(L^2)} \leq C \qquad \text{and}\qquad \|\bt^\ve\|_{L^\infty(L^2)} \leq C\sqrt\ve.
\end{equation*}
These estimates and the uniqueness of the limit imply that $v_1^\ve$ and $\ve v_2^\ve$ converge weakly-* in $L^\infty(0,T,L^2(\Om))$ toward zero, and that $\bt^\ve$ converges strongly in $L^\infty(0,T,\L^2(\Om))$ toward zero, which proves the last estimate \eqref{cvtps}.
\end{asparaitem}
\end{proof}
\nit Note that in a simplified case (with a simpler geometry), the hypothesis \eqref{hyp_petit} is satisfied under a small data assumption on the physical parameters.
\begin{rem}
When $h$ is constant with respect to $x$, $p^*$ is also independent of $x$, so that equation \eqref{EQ_star} reduces to 
\begin{equation*}
 -(1-r)\pa_z^2 u_1^* -r\dfrac{\pa}{\pa z}\lt(\dfrac{\pa_z u_1^*}{1+\lambda^{*2}|\pa_z u_1^*|^2}\rt)=0.
\end{equation*}
It has been shown in \cite{BCF04} for example that for $r<8/9$ this equation admits a unique solution $u_1^*=s(1-\frac{z}h)$.

\nit Now, it follows that $\sigma_{12}^*=\dfrac{r\nu \pa_z u_1^*}{1+\lambda^{*2}|\pa_z u_1^*|^2}=\dfrac{-r\nu s}{h+\lambda^{*2}s^2/h}$, and $\sigma_{11}^* = -\sigma_{22}^* = -\lambda^* \pa_z u_1^* \sigma_{12}^*=\dfrac{-r\nu s^2 \lambda^*}{h^2+\lambda^{*2}s^2}$.

\nit In this case, hypothesis \eqref{hyp_petit} becomes more simple. Since $\pa_z u_1^*=-s/h$, $\sigma_{11}^*$ and $\sigma_{12}^*$ are constant with respect to $z$, so that the last two conditions are trivially verified. Using the fact that $r<8/9$,
it leads to a smallness condition on $s\lambda^*$ with respect to $h$ ($s\lambda^* \leq h/12$ is enough in order to satisfy all conditions).

\nit Observe that this condition is not optimal, but it shows that in the simplified case when $h(x)$ is constant, a simple choice of the parameters $s$, $\lambda^*$ and $h$ satisfies hypothesis \eqref{hyp_petit}. 
\end{rem}

\subsection{Convergence of the pressure}
\nit It remains to prove the convergence of the pressure.
\begin{theo}\label{mainth_p}
 Under the same smallness assumption \eqref{hyp_petit}, the following convergence result holds up to a subsequence for $p$:
\begin{equation}\label{cvp}
 \ve^2 p \mathop\rightarrow\lm_{\ve\rightarrow 0} p^* \quad \text{in} ~ \mathcal D'(0,T,L^2(\Om)).
\end{equation}
\end{theo}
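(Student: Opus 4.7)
Write $\ve^2 p^\ve = p^* + q^\ve$, so the statement amounts to $q^\ve \to 0$ in $\mathcal{D}'((0,T)\times \Omega)$. Since both $p^\ve$ and $p^*$ have vanishing integral on $\Omega$, so does $q^\ve$. Fix $\vphi \in \mathcal{D}((0,T)\times\Omega)$; replacing $\vphi(t,\cdot)$ by its zero-mean representative $\vphi(t,\cdot) - \langle \vphi(t,\cdot)\rangle_\Omega$ does not affect $\int q^\ve \vphi$. By the Bogovskii (de Rham) construction on the Lipschitz domain $\Omega$, one obtains $\bds\psi \in \mathcal{D}(0,T;\H^1_0(\Omega))$ with $\nabla \cdot \bds\psi(t,\cdot) = \vphi(t,\cdot) - \langle\vphi(t,\cdot)\rangle_\Omega$ and $\|\bds\psi(t)\|_{\H^1_0} \leq C\|\vphi(t)\|_{L^2}$. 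Integration by parts in space then gives $\int q^\ve \vphi = -\int \nabla q^\ve \cdot \bds\psi$, so the proof reduces to showing $\int \nabla q^\ve \cdot \bds\psi \to 0$.

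\textbf{Using the momentum equations.} Multiplying \eqref{EQ_rest:a} and \eqref{EQ_rest:b} respectively by $\ve^2$ and $\ve^4$, the components of $\nabla q^\ve$ are expressed as sums of five types of terms: (i) time derivatives $\ve^2 \rho\, \pa_t v_1^\ve$ and $\ve^4 \rho\, \pa_t v_2^\ve$; (ii) convective terms $\ve^2 \rho\, \bv^\ve\cdot \nabla v_1^\ve$ and $\ve^4 \rho\, \bv^\ve\cdot \nabla v_2^\ve$; (iii) viscous terms $\ve^2(1-r)\nu\,\Delta_\ve v_i^\ve$, for which the dangerous component $\pa_z^2 v_1^\ve$ is saved by the factor $\ve^2$ combined with $\|\pa_z v_1^\ve\|_{L^2(L^2)} = O(\ve)$; (iv) stress divergences $\ve\,\pa_x\tau_{11}^\ve$, $\pa_z \tau_{12}^\ve$, $\ve^2\,\pa_x\tau_{12}^\ve$, $\ve\,\pa_z\tau_{22}^\ve$, which after moving the derivative onto $\bds\psi$ become $O(\ve)$ or better thanks to $\|\bt^\ve\|_{L^2(L^2)} \leq C\ve$; and (v) lower-order terms $\ve^2 L_i^\ve$ and the constants $\ve\,\pa_x\sigma_{11}^*$, $\ve\,\pa_z\sigma_{22}^*$, which are uniformly bounded in $L^2$ by the regularity of $(\bu^*,\bs^*)$ obtained in Theorem \ref{th:reg}.

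\textbf{Passing to the limit and main difficulty.} After one further integration by parts---transferring spatial derivatives onto $\bds\psi$ and the $\pa_t$ onto $\bds\psi$ in time---every resulting contribution is bounded by a strictly positive power of $\ve$ times a quantity controlled uniformly by the a priori estimate \eqref{est}, so it tends to $0$ as $\ve\to 0$. The most delicate term is the convective part of $\di_t v_i^\ve$: using $\nabla\cdot \bv^\ve = 0$ and $\bv^\ve|_{\pa\Omega}=0$, it rewrites as $-\ve^2\int v_1^\ve\,(\bv^\ve\cdot\nabla)\psi_1$, which is dominated by $\ve^2\|v_1^\ve\|_{L^2(L^2)}\|\bv^\ve\|_{L^\infty(L^2)}\|\nabla\bds\psi\|_{L^\infty}$, so the $L^\infty(L^2)$-bound on $\bv^\ve$ and the strong convergence $v_1^\ve \to 0$ from \eqref{cvu} close the estimate. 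The analogous argument for $\pa_z q^\ve$ works with even larger powers of $\ve$ in front of each term. Collecting all contributions yields $\int q^\ve\vphi \to 0$ for every $\vphi \in \mathcal{D}((0,T)\times\Omega)$, which is exactly \eqref{cvp}.
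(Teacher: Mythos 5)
Your construction via the Bogovskii operator and space-time test functions is a legitimate alternative to the paper's own device: the paper introduces $\pi=\int_0^T q\,\di t$, tests the momentum equations against \emph{time-independent} $\phi\in H^1_0(\Omega)$ so that $\int_0^T\pa_t v_1=v_1(T)-v_1(0)$, shows $\|\nabla\pi\|_{L^\infty(0,T;H^{-1})}\leq C\ve$, applies the Ne\v cas inequality $|\pi|\leq\|\nabla\pi\|_{H^{-1}}$ for zero-mean functions, and only at the very end differentiates in $t$ to recover $q$ in $\mathcal D'$. Conceptually both routes rest on the same de Rham / negative-norm fact about mean-free pressures, but the paper applies it \emph{a posteriori} to $\pi$, whereas you apply it on the test-function side. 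That is a genuinely different (and in principle workable) organization of the argument, and it yields the $\mathcal D'$ statement directly.

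There is, however, a real gap at the convective term. You bound $\ve^2\int v_1^\ee\,(\bv^\ee\cdot\nabla)\psi_1$ using $\|\nabla\bds\psi\|_{L^\infty}$, but the Bogovskii operator on the Lipschitz domain $\Omega$ gives only $\|\bds\psi(t)\|_{\H^1_0}\leq C\|\vphi(t)\|_{L^2}$ (and more generally $W^{1,p}$ bounds for $p<\infty$); it does \emph{not} give $W^{1,\infty}$ control of $\bds\psi$, even for smooth $\vphi$, since $\vphi(t,\cdot)-\langle\vphi(t,\cdot)\rangle_\Omega$ is not compactly supported and $\Omega$ has corners. So the inequality $\ve^2\|v_1^\ee\|_{L^2(L^2)}\|\bv^\ee\|_{L^\infty(L^2)}\|\nabla\bds\psi\|_{L^\infty}$ is not available. (A secondary slip: \eqref{est} gives $\|\ve v_2^\ee\|_{L^\infty(L^2)}\leq C$, hence $\|\bv^\ee\|_{L^\infty(L^2)}\leq C/\ve$, not $O(1)$; the prefactor $\ve^2$ absorbs this, but it should be tracked.) Precisely this convective term is where the paper has to work: it does \emph{not} move the derivative onto the test function but keeps $\phi_1\in L^6$ (from $\H^1_0\hookrightarrow L^6$), interpolates $|v_1|_{2+\delta}\leq C\,|v_1|_4^\theta|v_1|^{1-\theta}$, and invokes the anisotropic Ladyzhenskaya inequality $|v_1|_4\leq\sqrt2\,|\pa_x v_1|^{1/4}|\pa_z v_1|^{3/4}$ of \cite{ABC94} to manufacture the required power of $\ve$. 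Your proof closes if, instead of appealing to $\|\nabla\bds\psi\|_{L^\infty}$, you keep $\bds\psi$ in $\H^1_0$ (or $W^{1,p}_0$ for a finite $p$), forgo the final integration by parts, and reproduce that interpolation estimate; as written, the step you call ``close the estimate'' is asserted rather than proved.
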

\begin{proof}
Throughout the proof, $C$ will denote some generic constants independent of $\ve$.
Let $\ve \leq 1$. Let us integrate over $\Om_T = \Om\times (0,T)$ equation \eqref{EQ_rest:a} multiplied by $\ve^2 \vphi_1$, for any function $\phi_1\in H^1_0(\Om)$. It follows:
\begin{equation}\label{weakp}
\begin{split} 
\rho \ve^2 \intOt &\pa_t v_1\phi_1 +\rho\ve^2\intOt v_1 \pa_x v_1 \phi_1+ \rho \ve\intOt v_2 \pa_z v_1 \phi_1 + (1-r)\nu \ve^2 \intOt\pa_x v_1 \,\pa_x \phi_1 + (1-r)\nu \intOt\pa_z v_1\, \pa_z \phi_1\\
&+ \intOt \pa_x q\, \phi_1 = -\ve \intOt \a\pa_x \phi_1 -\intOt \b\pa_z \phi_1 + \ve^2\intOt L_1\phi_1 + \ve\intOt C_1\phi_1, \quad \forall \phi_1\in H^1_0(\Om).
\end{split}
\end{equation}
Using the fact that $\phi_1$ is independent of $t$, the first term becomes
\begin{equation*}
 \rho \ve^2 \intOt \pa_t v_1\phi_1 =  \rho \ve^2 \intO \phi_1 \int\lm_0^T\pa_t v_1 = \rho \ve^2 \intO \phi_1 (v_1(T) -v_1(0)),
\end{equation*}
where $v_1(0) = u_{1_{\scriptstyle 0}}-u_1^*$ denotes the value of $v_1$ at time $t=0$. Now, introducing 
\begin{equation*}
 \pi = \int\lm_0^T q \,\di t,
\end{equation*}
and using integration by parts for the pressure term (the boundary term is zero since $\phi_1\in H^1_0(\Om)$), \eqref{weakp} becomes: $\forall \phi_1\in H^1_0(\Om)$,
\begin{equation}
\begin{split} 
\rho \ve^2 &\intO \phi_1(v_1(T) -v_1(0)) +\rho\ve^2\intOt v_1 \pa_x v_1 \phi_1+ \rho \ve\intOt r_2 \pa_z v_1 \phi_1 + (1-r)\nu \ve^2 \intOt\pa_x v_1 \,\pa_x \phi_1 \\&+ (1-r)\nu \intOt\pa_z v_1\, \pa_z \phi_1
- \intO \pi\, \pa_x\phi_1 = -\ve \intOt \tau_{11}\pa_x \phi_1 -\intOt \tau_{12}\pa_z \phi_1 + \ve^2\intOt L_1\phi_1 + \ve\intOt C_1\phi_1.
\end{split}\nonumber
\end{equation}

\nit It remains to estimate all terms independent of $\pi$. 
The non-linear terms are to bee handled with care, since $\phi_1 \notin L^\infty(\Om)$. Proceeding as in \cite{BCC99}, Hölder inequality with exponents $2+\delta$, $\delta'$ and 2 leads:
\begin{equation}\label{estnlin_p1}
 \lt|\intOt v_1 \pa_x v_1 \phi_1 \rt| \leq  |\phi_1|_{\delta'} \int\lm_0^T |v_1|_{2+\delta}\,| \pa_x v_1 |,
\end{equation}
where $\dfrac1{2+\delta} + \dfrac12 + \dfrac1{\delta'} =1$ (which implies that $\delta'=\dfrac{2(2+\delta)}\delta$). According to interpolation theory, $\lt[L^2,L^4\rt]_\theta = L^{2+\delta}$ for $\theta = \dfrac\delta{2+\delta}$, and the following estimate holds:
\begin{equation*}
 |v_1|_{2+\delta} \leq C |v_1|_4^\theta \, |v_1|^{1-\theta}.
\end{equation*}
%where $C$ is a constant. 
Moreover Lemma 3.2 of \cite{ABC94} states that for $v_1\in H^1_0(\Om)$, it holds:
\begin{equation*}
 |v_1|_4 \leq \sqrt2 |\pa_x v_1|^{1/4}\, |\pa_z v_1|^{3/4}.
\end{equation*}
Using the two last inequalities and Poincaré inequality, \eqref{estnlin_p1} becomes
\begin{equation*}
  \rho\ve^2 \lt|\ \intOt v_1 \pa_x v_1 \phi_1 \rt| \leq \rho \ve^2 |\phi_1|_{\delta'} C \int\lm_0^T   |\pa_x v_1|^{\theta/4} \, |\pa_z v_1|^{3\theta/4} |\pa_z v_1|^{1-\theta} |\pa_x v_1|,
\end{equation*}
and Hölder inequality implies that
\begin{equation*}
  \rho\ve^2\intOt v_1 \pa_x v_1 \phi_1 \leq \rho \ve^2 |\phi_1|_{\delta'} C \|\pa_x v_1\|_{L^2(\Om_T)}^{1+\theta/4} \, \|\pa_z v_1\|_{L^2(\Om_T)}^{1-\theta/4} .
\end{equation*}
Now, choose $\theta$ (and thus $\delta$) such that $\delta' \geq 6$. It suffices to take $\theta \leq \frac13$, for example take $\theta=\dfrac13$. Then $\delta'=6$, and the usual Sobolev embeddings read $H^1(\Om) \hookrightarrow L^6(\Om)$ (which is true in dimension 2 or 3).
Therefore, the last estimate becomes
\begin{equation*}
  \rho\ve^2\intOt v_1 \pa_x v_1 \phi_1 \leq \rho \ve^2 C\|\phi_1\|_{H^1} \|\pa_x v_1\|_{L^2(\Om_T)}^{13/12} \, \|\pa_z v_1\|_{L^2(\Om_T)}^{11/12} .
\end{equation*}
Now, recalling that $\|\pa_z v_1\|_{L^2(L^2)}\leq C\ve$ and $\|\pa_x v_1\|_{L^2(L^2)}\leq C$, we conclude
\begin{equation*}
  \rho\ve^2\intOt v_1 \pa_x v_1 \phi_1 \leq \rho \ve^2 C\|\phi_1\|_{H^1}  \ve^{11/12} =  \rho \ve^{2+11/12} C\|\phi_1\|_{H^1} \leq \ C \ve \|\phi_1\|_{H^1}.
\end{equation*}
In a similar way, it holds
\begin{equation*}
  \rho\ve\intOt r_2 \pa_z v_1 \phi_1 \leq  \rho \ve^{2-1/12} C\|\phi_1\|_{H^1} \leq \widetilde C \ve \|\phi_1\|_{H^1}.
\end{equation*}
For the term $\rho \ve^2 \intO \phi_1(v_1(T) -v_1(0))$, we apply Cauchy-Schwarz inequality. $v_1(0)$ is bounded, and for $v_1(T)$, we use Poincaré inequality. It follows, using the fact that $|\pa_z v_1|\leq C\ve$:
\begin{equation*}
 \rho \ve^2 \intO \phi_1(v_1(T) -v_1(0)) \leq (C|v_1| + C) \ve^2 \|\phi_1\|_{H^1}  \leq (C|\pa_z v_1| + C) \ve^2 \|\phi_1\|_{H^1} \leq C\ve^2 \|\phi_1\|_{H^1} \leq C\ve \|\phi_1\|_{H^1}.
\end{equation*}

\nit For the other linear terms, a simple application of Cauchy-Schwarz inequality allows to obtain similar estimates. 
Indeed, it suffices to use the estimate \eqref{est} in order to estimate the $L^2$-norm of $\pa_x v_1$, $\pa_z v_1$, $\a$, $\b$, $L_1$, $C_1$. For example, since $|\pa_x v_1| \leq C$, the following estimate holds:
\begin{equation*}
 \rho \ve^2\intO \pa_x v_1 \, \pa_x \phi_1 \leq \rho \ve^2 |\pa_x v_1|\, |\pa_x \phi_1| \leq C \ve^2 \|\phi_1\|_{H^1}. 
\end{equation*}
For the terms $L_1$ and $C_1$, $C_1$ and the constant part of $L_1$ are obviously bounded uniformly in $\ve$. It remains to estimate the linear term $\mathcal L_1$ of $L_1$. Recalling its definition and using Poincaré inequality in the second estimate:
\begin{equation*}
 |\mathcal L_1| \leq C\lt(|v_1|+|v_2|+|\pa_x v_1|+|\pa_z v_1|\rt) \leq C\lt(|\pa_z v_1| + |\pa_x v_1| + |\pa_z v_2|\rt).
\end{equation*}
Using again \eqref{est}, the boundedness of $\mathcal L_1$ follows:
\begin{equation*}
 |\mathcal L_1| \leq C.
\end{equation*}
Hence $\forall \phi_1\in H^1_0(\Om)$:
\begin{equation*}
 \intO\pa_x \pi\, \phi_1\leq C\lt(\ve +\ve^2|\pa_x v_1| +|\pa_z v_1|+\ve|\tau_{11}|+|\tau_{12}|+\ve^2|L_1|+\ve|C_1|\rt)\|\phi_1\|_{H^1} \leq C\ve\|\phi_1\|_{H^1}.
\end{equation*}
The same approach with \eqref{EQ_rest:b} gives a similar estimate, for all $\phi_2\in H^1_0(\Om)$:
\begin{equation*}
 \intO\pa_z \pi \,\vphi_2\leq C\lt(\ve +\ve^4|\pa_x v_2|+\ve^2|\pa_z v_2|+\ve^2|\tau_{12}|+\ve|\tau_{22}|+\ve^2|L_2|+\ve|C_2|\rt) \|\phi_2\|_{H^1}\leq C\ve\|\phi_2\|_{H^1}.
\end{equation*}
Thus we can conclude that $\|\nabla \pi\|_{L^\infty(H^{-1})} \leq C\ve$. 

\nit Now recall that for $f\in L^2_0(\Om)$, it holds that $|f|\leq \|\nabla q\|_{H^{-1}}$ (see for example \cite{Tem79}).
Since $p\in L^2_0(\Om)$ and $p^*\in L^2_0(\Om)$, $q$ lies in $L^2_0(\Om)$. From the definition of $\pi$ as function of $q$, it is clear that $\pi\in L^2_0(\Om)$.

\nit This allows to deduce
\begin{equation*}
 |\pi|_{L^\infty(L^2)}\leq \|\nabla \pi\|_{L^\infty(H^{-1})} \leq C\ve \rightarrow 0,
\end{equation*}
thus $\pi$ tends to zero in $L^\infty(0,T,L^2_0(\Om))$ when $\ve\rightarrow 0$. Now, since $q = \dfrac{\pa\pi}{\pa t}$, it follows that $q$ tends to zero in $\mathcal D'(0,T,L^2_0(\Om))$, and therefore:
\begin{equation*}
 \ve^2 p \mathop\rightarrow\lm_{\ve\rightarrow 0} p^* \quad \text{in} ~ \mathcal D'(0,T,L^2(\Om)).
\end{equation*}
This finishes the proof.
\end{proof}

\subsection{Open problems}
This work concerns only the solutions of the problem \eqref{EQ} that are obtained as the limit of the regularized problem we chose (with an additional term $-\eta \Delta \bs$). Since there is no uniqueness result for problem \eqref{EQ}, it is not known how other solutions behave.
\smallskip
\par  \nit Formally, the passing to the limit can be done for $a\neq 0$ (see \cite{BCM07}), and a similar limit problem (involving the parameter $a$, but of the same structure). However, the proof of the existence theorem in $\hat \Om^\ve$ strongly relies on the fact that $a=0$. No global results are proved in the case $a\neq 0$. 
\smallskip
\par \nit Last, since the computations are independent of the dimension of the domain $\Om$, the result should be true in the three-dimensional case. The limit problem on $(\bu^*,p^*,\bs^*)$ reads:
\begin{System}\label{EQ_star3D}
(1-r)\nu \pa_z^2 u_1^*-\pa_x p^* +\pa_z \sigma_{13}^*=0,\\
(1-r)\nu \pa_z^2 u_2^*-\pa_x p^* +\pa_z \sigma_{23}^*=0,\\
\pa_z p^*=0,\\
\nabla \cdot \bu^* =0,\\
-\lambda^* \pa_z u_1^* \sigma_{13}^* + \sigma_{11}^*=0,\\
-\dfrac{\lambda^*}2 \pa_z u_1^*\sigma_{13}^* -\pa_z u_2^* \sigma_{23}^* + \sigma_{12}^* =0,\\
-\lambda^* \pa_z u_2^* \sigma_{23}^* + \sigma_{22}^*=0,\\
\dfrac{\lambda^*}2 \pa_z u_2^*(\sigma_{33}^* -\sigma_{22}^*)- \dfrac{\lambda^*}2 \pa_z u_1^*\sigma_{12}^*+ \sigma_{23}^* =r\nu \pa_z u_2^*,\\
\lambda^* \lt(\pa_z u_1^* \sigma_{13}^*+ \pa_z u_2^* \sigma_{23}^*\rt) + \sigma_{33}^*=0,\\
\dfrac{\lambda^*}2 \pa_z u_1^*(\sigma_{33}^* -\sigma_{11}^*) - \dfrac{\lambda^*}2 \pa_z u_2^*\sigma_{12}^*+ \sigma_{12}^* =r\nu \pa_z u_1^*.
\end{System}

\section*{Acknowledgements}
The authors would like to thank P. Mironescu for useful discussions concerning section \ref{s:reg} of this paper. 

\bibliographystyle{plain}

%\bibliography{bib_all_papers}
\end{document}